 \DeclareMathOperator{\Pic}{Pic}
\DeclareMathOperator{\im}{im} 
\theoremstyle{definition}\newtheorem{definition}{Definition}
\newtheorem{remark}[definition]{Remark}
\newtheorem{lemma}[definition]{Lemma}
\newtheorem{proposition}[definition]{Proposition}
\newtheorem{theorem}[definition]{Theorem}
\newtheorem{example}[definition]{Example}
\theoremstyle{remark}\newtheorem*{terminology}{Terminology}
\newtheorem*{subject}{2000 Mathematics Subject Classification}
\newtheorem*{keywords}{Keywords}
\author{Marc Coppens, Johannes Huisman}
\title{Pencils on real curves}
\date{}
\begin{document}
\maketitle \noindent

\begin{abstract}
We consider coverings of real algebraic curves to real rational
algebraic curves. We show the existence of such coverings having
prescribed topological degree on the real locus. From those
existence results we prove some results on Brill-Noether Theory for
pencils on real curves. For coverings having topological degree
$\underline{0}$ we introduce the covering number $k$ and we prove
the existence of coverings of degree 4 with prescribed covering
number.
\end{abstract}

\begin{subject}
14H51; 14P99
\end{subject}

\begin{keywords}
real curve, linear pencil, topological degree, morphism,
Brill-Noether Theory, divisor
\end{keywords}

\section*{Introduction}\label{intro}

In this paper we show the existence of coverings $\pi : X
\rightarrow \mathbb{P}^1$ of $\mathbb{P}^1$ by real curves
satisfying some specific properties related to the real locus
$X(\mathbb{R})$. Part of those constructions is related to recent
results on real pencils (one-dimensional linear systems) on real
curves.

We write $X(\mathbb{C})$ to denote the associated Riemann surface,
$g(X)$ to denote its genus and $a(X)\in \{0,1\}$ such that $2-a(X)$
is the number of connected components of $X(\mathbb{C})\setminus
X(\mathbb{R})$. Consider a covering $\pi : X \rightarrow
\mathbb{P}^1$ and choose an orientation on
$\mathbb{P}^1(\mathbb{R})$. Let $C$ be a connected component of
$X(\mathbb{R})$ and consider the restriction $\pi |_C : C\rightarrow
\mathbb{P}^1(\mathbb{R})$. Fixing an orientation on $C$ this
restriction has a topological degree $\delta _C(\pi )$ and we assume
the orientations are chosen such that $\delta _C(\pi )\geq 0$. We
consider the existence of coverings having prescribed values for
those topological degrees. There are trivial natural restrictions
between $d=\deg (\pi )$, $g=g(X)$, $a=a(X)$ and those topological
degrees and we prove that there exists such a covering $\pi $ for
all values $d$, $g$, $a$ and $\delta _C $ satisfying those
restrictions.

In particular, for each $s\geq 2$, $g \equiv s-1 \pmod{2}$ with
$g\geq s-1$ there exists a real curve $X$ of genus $g$ such that
$a(X)=0$ and $X(\mathbb{R})$ has exactly $s$ connected components
and a morphism $\pi :X\rightarrow \mathbb{P}^1$ of degree $s$ such
that $\delta _C(\pi )=1$ for each component $C$ of $X(\mathbb{R})$.
In particular $\pi (\mathbb{R}):X(\mathbb{R})\rightarrow
\mathbb{P}^1(\mathbb{R})$ is the union of $s$ homeomorphisms. In
\cite {ref9} one introduces very special linear systems on real
curves satisfying strong Clifford-type properties for real curves
and in Proposition 2.1 of \cite {ref9} it is proved that the only
very special pencils are exactly those defined by such coverings. So
we obtain a proof for the existence of very special pencils of all
possible types.

In \cite{ref3} one studies some Brill-Noether properties for pencils
on real curves. We give another proof of part of the results in
\cite{ref3} making intensive use of results coming from the theory
of complex curves. In this way it is enough to prove the existence
of one real curve having a pencil of suited degree. Because of our
existence results we obtain Brill-Noether properties for pencils
having prescribed topological degree on the connected components of
the real locus. While the arguments in \cite{ref3} are restricted to
real curves having real points, our arguments also give
Brill-Noether properties for pencils on real curves without real
points.

The existence and Brill-Noether results are also considered in
\cite{ref23}, \cite{ref17} and \cite{ref18} without considering the
topological degrees. In order to make the arguments more careful
with respect to dimension arguments on the real locus of a variety
defined over $\mathbb{R}$  we use universal spaces representing
morphisms that are known to be globally smooth because of Horikawa
deformation theory.

A real divisor $D$ on a real curve is called totally non-real in
case the support of $D$ contains no real point of $X$. It is known
that for a real linear system on a real curve the parity of the
restriction of a divisor to a connected component of $X(\mathbb{R})$
is constant. Hence if this parity is odd for some component then the
linear system does not contain a totally non-real divisor.
Restricting to linear systems such that the parity is even for all
components of $X(\mathbb{R})$, there is a sharp result in
\cite{ref10} concerning the existence of totally non-real divisors
in such a linear system. For linear systems of large dimension, it
is not easy to find non-trivial examples of such linear systems
having no totally non-real divisors. In the case of pencils our
constructions give a lot of such examples coming from coverings
having only even topological degrees and at least one of them being
non-zero. One can ask for the existence of such examples coming from
coverings with all topological degrees equal to zero. We prove the
existence of all types of such coverings of the smallest possible
degree 4.

In Section 1 we give the constructions that are the base for the
existence of coverings with prescribed topological degrees. In
Section 2 we prove the existence of coverings with prescribed
topological degrees. In Section 3 we discuss Brill-Noether problems
for pencils on real curves. Finally in Section 4 we prove the
existence of coverings of degee 4 with all topological degrees equal
to 0 and having no non-real divisor.

\begin{terminology}
\end{terminology}

A smooth real curve $X$ of genus $g=g(X)$ is a scheme defined over
$\mathbb{R}$ such that the base change $X\times _\mathbb{R}
\mathbb{C}$, denoted by $X_{\mathbb{C}}$, is a complete connected
smooth complex curve of genus $g$. We also write $X(\mathbb{C})$ to
denote the set of closed points of $X_{\mathbb{C}}$ and we consider
it as a Riemann surface of genus $g$. As usual the non-trivial
element of $Gal(\mathbb{C}/\mathbb{R})$ is denoted by $z \rightarrow
\overline{z}$ and it is called (complex) conjugation. For a real
curve $X$ it induces an $\mathbb{R}$-involution $\sigma :
X_{\mathbb{C}} \rightarrow X_{\mathbb{C}}$. For $P\in X(\mathbb{C})$
we write $\overline{P}$ instead of $\sigma (P)$ and we call it the
\emph{conjugated point} of $P$. In case $P=\overline{P}$ then $P$ is
called a \emph{real point} of $X$. The set of real points is called
the real locus $X(\mathbb{R})\subset X(\mathbb{C})$. This locus is a
union of $s(X)$ connected components, each one of them homeomorphic
to a circle. The invariant $a(X)$ is defined by $a(X)=1$ in case
$X(\mathbb{C})\setminus X(\mathbb{R})$ is connected and otherwise
$a(X)=0$ (in that case $X(\mathbb{C})\setminus X(\mathbb{R})$ has
two connected components). We say a smooth real curve $X$ has
\emph{topological type} $(g,s,a)$ if $g(X)=g$, $s(X)=s$ and
$a(X)=a$. By a theorem of Weichold such real curve exists if and
only if either $a=1$ and $0\leq s\leq g$ else $a=0$, $s \equiv g+1
\pmod 2$ and $1\leq s\leq g+1$ (see \cite{ref16}, see also
\cite{ref1}*{Theorem 5.3}). A triple $(g,s,a)$ satisfying those
restrictions is called an \emph{admissable topological type} for
real curves. In case $P\neq \overline{P}$ then we call
$P+\overline{P}$ a \emph{non-real point} of $X$.

For a divisor $D=\sum _{i=1}^n m_iP_i$ on $X_{\mathbb{C}}$ we write
$\overline{D}=\sum _{i=1}^n m_i\overline{P_i}$ and we call it the
conjugated divisor of $D$. As usual $\sum _{i=1}^n m_i$ is called
the degree of $D$, denoted by $\deg (D)$. We say $D$ is a real
divisor if $\overline{D}=D$. If we say $D$ is a divisor on $X$ then
we mean it is a \emph{real divisor} on $X_{\mathbb{C}}$.

Let $L$ be an invertible sheaf on $X_{\mathbb{C}}$. It can be
described by means of trivializations on an open covering $\left(
U_i \right)_{i\in I}$ of $X(\mathbb{C})$ and transition functions
$a_{i,j}$ (those are regular functions $U_i \cap U_j \rightarrow
\mathbb{C}^*$). The conjugated invertible sheaf $\overline{L}$ is
defined by means of trivializations on the open covering $\left(
\sigma (U_i) \right)_{i\in I}$ and transition functions
$\overline{a_{i,j}}$ (for $P\in \sigma (U_i \cap U_j)$ one has
$\overline{a_{i,j}}(P)=\overline{a_{i,j}(\overline{P})}$). Each
invertible sheaf $L$ on $X_{\mathbb{C}}$ is isomorphic to
$\mathcal{O}_{X_{\mathbb{C}}}(D)$ for some divisor $D$ on
$X_{\mathbb{C}}$. In case $k=\deg (D)$ then we say $L$ is an
invertible sheaf of degree $k$ and we write $\deg (L)=k$. In case
$L\cong \mathcal{O}_X(D)$ then $\overline{L}\cong
\mathcal{O}_X(\overline{D})$.

We say $L$ is a \emph{real invertible sheaf} if $L \cong
\mathcal{O}_X(D)$ for some real divisor $D$. We say $L$ is
\emph{invariant under conjugation} if $L \cong \overline{L}$. In
particular a real invertible sheaf is invariant under conjugation.
Let $\Pic (X)$ be the Picard scheme of $X$ defined over $\mathbb{R}$
(see \cite{ref11a}). Then $\Pic (X)_{\mathbb{C}}$ represents the
Picard functor on $X_{\mathbb{C}}$, in particular $\Pic
(X)(\mathbb{C})$ parameterizes invertible sheaves on
$X_{\mathbb{C}}$. For $k\in \mathbb{Z}$ one has natural subschemes
$\Pic ^k(X)$ such that $\Pic ^k(X)(\mathbb{C})$ parameterizes
invertible sheaves of degree $k$ on $X_{\mathbb{C}}$. The real locus
$\Pic ^k(X)(\mathbb{R})$ parameterizes invertible sheaves of degree
$k$ on $X_{\mathbb{C}}$ invariant under conjugation. Let $\Pic
^k(X)(\mathbb{R})^+$ be the sublocus parameterizing real invertible
sheaves. In case $X(\mathbb{R})\neq \emptyset$ then $\Pic (X)$ also
represents the Picard functor on $X$ and therefore
$\Pic^k(X)(\mathbb{R})^+=\Pic^k(X)(\mathbb{R})$ in that case. In
case $X(\mathbb{R})=\emptyset$ then $\Pic^k(X)(\mathbb{R})^+$ is a
subgroup of $\Pic^k(X)(\mathbb{R})$ with quotient
$\mathbb{Z}/2\mathbb{Z}$. For $L\in \Pic^k(X)(\mathbb{R})$ with
$L\notin \Pic(X)(\mathbb{R})^+$ one has $\deg (L) \equiv g-1
\pmod{2}$ (see \cite{ref2}*{Proposition 2.2}).

We write $|L|$ to denote the complete linear system of an invertible
sheaf $L$ on $X_{\mathbb{C}}$. In case $L$ is a real invertible
sheaf then $|L|(\mathbb{R})$ is the space of real divisors contained
in $|L|$. One has $|L|=|L| (\mathbb{R}) \otimes \mathbb{C}$ in a
natural way (meaning $|L| (\mathbb{R})$ can be considered as a
projective space $\mathbb{P}^r$ and then
$|L|=\mathbb{P}^r_{\mathbb{C}}$). As usual we write $g^r_d$ to
denote a linear system of dimension $r$ and degree $k$ on
$X_{\mathbb{C}}$. In case $L$ is a real invertible sheaf of degree
$k$ then we say a linear subsystem $g^r_k$ of $|L|$ is defined over
$\mathbb{R}$ if there exists a $k$-dimensional linear subsystem
$g^r_k(\mathbb{R})\subset |L|(\mathbb{R})$ such that
$g^r_k=g^r_k(\mathbb{R}) \otimes \mathbb{C}$. In such case we say
$g^r_k$ is a linear system on $X$. In particular if $L$ is a real
invertible sheaf then we say $|L|$ is a linear system on $X$.

We write $\mathbb{P}^1$ to denote the projective line defined over
$\mathbb{R}$ and we write $R_0$ to denote the smooth real curve of
genus 0 without real points (defined by the equation $X^2+Y^2+Z^2=0$
in $\mathbb{P}^2$).

Let $X$ be a smooth real curve. A covering $\pi : X\rightarrow
\mathbb{P}^1$ or $\pi : X \rightarrow R_0$ is a finite morphism
defined over $\mathbb{R}$. We say $\pi$ has degree $d$ in case the
associated covering $\pi _{\mathbb{C}}: X _{\mathbb{C}} \rightarrow
\mathbb{P}^1_{\mathbb{C}}$ has degree $d$. Such covering corresponds
to a base point free pencil $g^1_d$ on $X_{\mathbb{C}}$. In case of
$\pi :X \rightarrow \mathbb{P}^1$ this is called a \emph{real
pencil} on $X$. In case of $\pi : X \rightarrow R_0$ this is called
a \emph{non-real invariant pencil} on $X$. This case only can occur
if $X(\mathbb{R})=\emptyset$ ($X$ has no real points) and moreover,
in this case the pencil has no real divisor but it is invariant
under complex conjugation. We write $\pi (\mathbb{C}):X(\mathbb{C})
\rightarrow \mathbb{P}^1(\mathbb{C})$ to denote the covering of
Riemann surfaces. In case $X(\mathbb{R})$ is not empty we write $\pi
(\mathbb{R}): X(\mathbb{R}) \rightarrow \mathbb{P}^1(\mathbb{R})$ to
denote the restriction of $\pi (\mathbb{C})$ to the real locus
$X(\mathbb{R})$. For a connected component $C$ of $X(\mathbb{R})$ we
write $\pi _C:C \rightarrow \mathbb{P}^1(\mathbb{R})$ to denote the
restriction of $\pi (\mathbb{R})$ to $C$. On
$\mathbb{P}^1(\mathbb{R})$, homeomorphic to a circle $S^1$, we
choose an orientation and then for $C$, also homeomorphic to $S^1$,
we choose the orientation such that the degree of $\pi _C$, denoted
by $\delta _C(\pi)$, is nonnegative. If $P\in C$ and $\pi _C$ is not
ramified at $P$, then the local degree $\delta _P(\pi )$ is equal to
1 (resp. -1) if $\pi _C$ preserves (resp. reverses) the orientation
locally at $P$. Let $C_1, \cdots , C_s$ be the connected components
of $X(\mathbb{R})$ and let $\delta _i=\delta _{C_i}(\pi)$ for $1\leq
i\leq s$. We can always assume $\delta _1 \geq \delta _2 \geq \cdots
\geq \delta _s$ and then we say $\pi$ has \emph{topological degree}
$(\delta _1, \delta _2, \cdots, \delta _s)$ (shortly denoted by
$\underline{\delta}$).

\section{Constructions}\label{section1}

The proof of the existence of coverings with prescribed topological
degrees uses an induction argument. To make that argument we start
with a smooth real curve $Y$ and a suited covering $\pi _Y: Y
\rightarrow \mathbb{P}^1$. Using this covering we construct a
covering $\pi _0: X_0 \rightarrow \mathbb{P}^1$ with $X_0$ being a
real singular nodal curve and we use a real smoothing $\pi _t: X_t
\rightarrow \mathbb{P}^1$ of $\pi _0$. In this part we describe
those constructions giving relations between topological degrees of
$\pi_Y$ and of $\pi_t$. This will be the base for the induction
argument in the next section.

In those constructions we start by taking local smoothings of the
nodes of $X_0(\mathbb{C})$ having a natural antiholomorphic
involution. Those local smoothings glue with the complement $V$ of a
neighborhood of the nodes of $X_0(\mathbb{C})$ giving rise to a
deformation of compact Riemann surfaces $X_t(\mathbb{C})$ and
holomorphic coverings $\pi _t(\mathbb{C}): X_t(\mathbb{C})
\rightarrow \mathbb{P}^1(\mathbb{C})$. This complement $V$ can be
taken to be invariant under complex conjugation on $X_0(\mathbb{C})$
and this fits with the antiholomorphic involution of the local
smoothings under the gluing. Hence we obtain an antiholomorphic
involution $\sigma _t$ on $X_t(\mathbb{C})$. It is well-known that
this defines a smooth real curve $X_t$ inducing the Riemann surface
$X_t(\mathbb{C})$ such that $\sigma _t$ corresponds to complex
conjugation (see e.g. \cite {ref1}*{Section 4}). Moreover the
morphism $\pi _t(\mathbb{C}):X_t(\mathbb{C}) \rightarrow
\mathbb{P}^1(\mathbb{C})$ is invariant under complex conjugation
hence it comes from a morphism $\pi _t:X_t \rightarrow \mathbb{P}^1$
(indeed, the graph of $\pi _t(\mathbb{C})$ is a closed subspace of
$(X_t\times \mathbb{P}^1)(\mathbb{C})$ invariant under complex
conjugation on $X_t \times \mathbb{P}^1$).

\subsection{Construction I}\label{subsection1.1}

Let $Y$ be a real curve of genus $g$ and let $\pi :Y \rightarrow
\mathbb{P}^1$ be a morphism of degree $k$ defined over $\mathbb{R}$.
Assume $C$ is a connected component of $Y(\mathbb{R})$ and let $P$
be a point of $C$ such that $\delta _P(\pi )=1$ (such point $P$ does
exist because $\delta_C(\pi)\geq 0$). Take a copy of $\mathbb{P}^1$
and consider the singular curve $X_0=Y\cup _P\mathbb{P}^1$ obtained
by identifying $P$ on $X$ with $\pi (P)$ on $\mathbb{P}^1$. This
singular curve has a natural morphism $\pi _0$ defined over
$\mathbb{R}$ of degree $k+1$ to $\mathbb{P}^1$ having restriction
$\pi$ to $Y$ and the identity to $\mathbb{P}^1$ (see Figure
\ref{Figure 1}).

\begin{figure}[h]
\begin{center}
\includegraphics[height=3 cm]{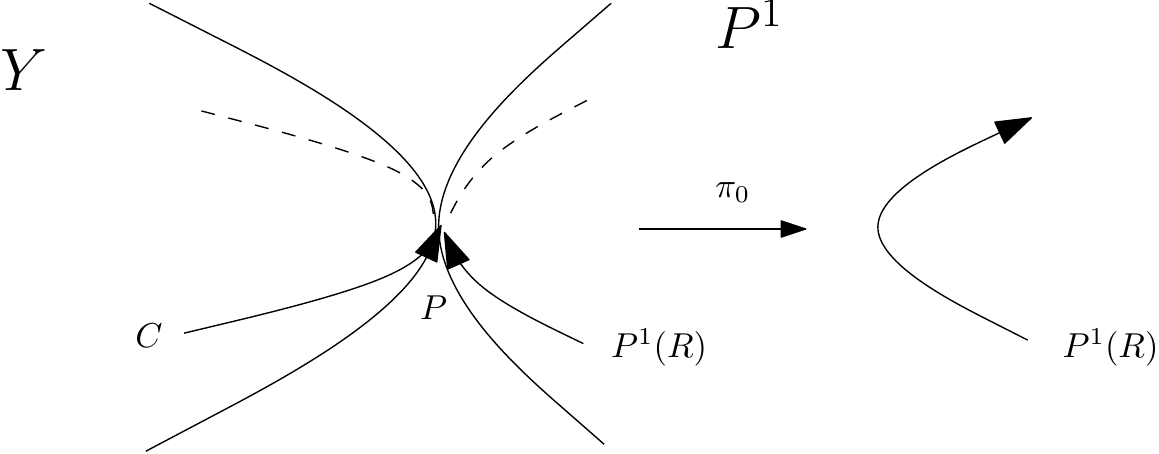}
\caption{Construction I, the curve $X_0$} \label{Figure 1}
\end{center}
\end{figure}

Locally at $P$ this situation can be described inside $\mathbb{C}^2$
such that the curve $X_0$ has equation $x^2-y^2=0$ and the morphism
is locally defined by $(x,y)\rightarrow x$ and we can assume the
coordinates are compatible with the real structure on $X_0$ (this
means complex conjugation on $X_0$ corresponds to complex
conjugation in  $\mathbb{C}^2$). Let $U$ be a small neighborhood of
$(0,0)$ in $\mathbb{C}^2$ and $V\subset U$ a much smaller one and
let $U_0=U\cap X_0$, $V_0=V\cap X_0$. For $Q\neq P$ in $U_0$ we can
use $x$ to define a holomorphic coordinate at $Q$. Consider a local
deformation of $X_0$ at $P$ defined by $x^2-y^2=t$ with $t\in
\mathbb{R}$ ($|t|$ very small) and let $U_t$ (resp. $V_t$) be the
intersection with $U$ (resp. $V$). We use a gluing of $U_t$ and
$X_0(\mathbb{C})\setminus V_0$ as follows. For $z_0\in \mathbb{C}^*$
let $_{z_0} \sqrt{z}$ be the locally defined holomorphic square root
function such that $_{z_0} \sqrt{z_0^2}=z_0$. A point $Q\in
U_0\setminus V_0$ has coordinates $(x,x)$ or $(x,-x)$. We identify
$(x,x)$ with $(x,_x\sqrt{x^2-t})$ and $(x,-x)$ with
$(x,-_x\sqrt{x^2-t})$. (One should adapt the description of $V_t$
and $U_t$ to this identification.) This defines a Riemann surface
$X_t(\mathbb{C})$. On $U_t$ we define $\sigma
_t(x,y)=(\overline{x},\overline{y})$. We need to show this behaves
well under the previous identification. Since $_x\sqrt{x^2-t}$ is
close to $x$ one has $\overline{_x\sqrt{x^2-t}}$ is close
$\overline{x}$. Moreover $\left(\overline{ _x\sqrt{x^2-t}}
\right)^2=\overline{\left( _x\sqrt{x^2-t}
\right)^2}=\overline{x^2-t}=\overline{x}^2-t$ hence
$\overline{_x\sqrt{x^2-t}}=_{\overline{x}}\sqrt{\overline{x}^2-t}$.
Hence under the identification $(\overline{x},\overline{x})$ is
identified with $(\overline{x},\overline{_x\sqrt{x^2-t}})$ and
similarly $(\overline{x},-\overline{x})$ is identified with
$(\overline{x},\overline{-_{x}\sqrt{x^2-t}})$. As mentioned at the
beginning of this section we obtain a real smooth curve $X_t$ and a
covering $\pi _t: X_t\rightarrow \mathbb{P}^1$ of degree $k+1$.
Under this deformation the union $C\cup_P\mathbb{P}^1(\mathbb{R})$
deforms to a connected component $C_t$ of $X_t(\mathbb{R})$

In case $t>0$ the morphism $\pi_t$ has ramification on $C_t$ above
$x=\pm \sqrt{t}$. We call this \emph{the deformation with real
ramification}. In order to have an orientation on $C_t$ we have to
change the orientation on the attached $\mathbb{P}^1(\mathbb{R})$.
This implies that $\delta_{C_t}(\pi_t)=\delta_C(\pi)-1$. Of course,
if $\delta_C(\pi)=0$ we also change the orientation of $C_t$ in
order to obtain $\delta_{C_{t}}(\pi_t)=1$ (see Figure \ref{Figure
2}).

\begin{figure}[h]
\begin{center}
\includegraphics[height=3 cm]{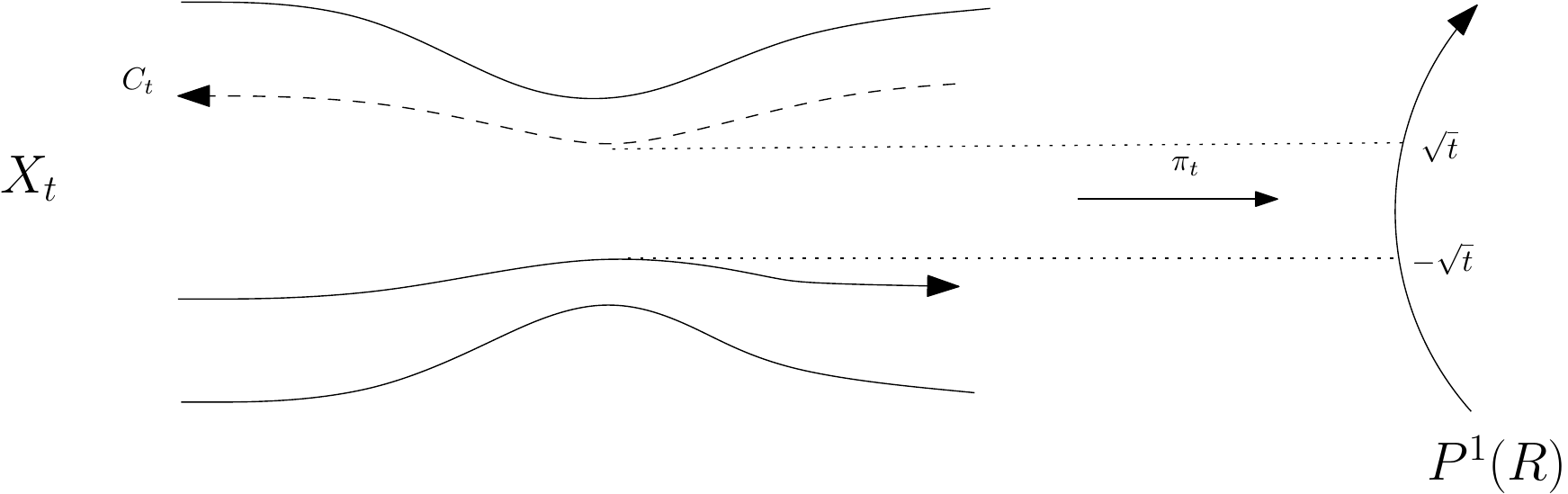}
\caption{Construction I, deformation with real ramification}
\label{Figure 2}
\end{center}
\end{figure}

In case $t<0$ the morphism $\pi_t$ has no ramification on $C_t$
close to $P$. We call this \emph{the deformation without real
ramification}. We use the orientation on $C_t$ obtained from both
the orientation on $C$ and the attached $\mathbb{P}^1(\mathbb{R})$.
This implies $\delta_{C_t}(\pi_t)=\delta_C(\pi)+1$ (see Figure
\ref{Figure 3}).

\begin{figure}[h]
\begin{center}
\includegraphics[height=3 cm]{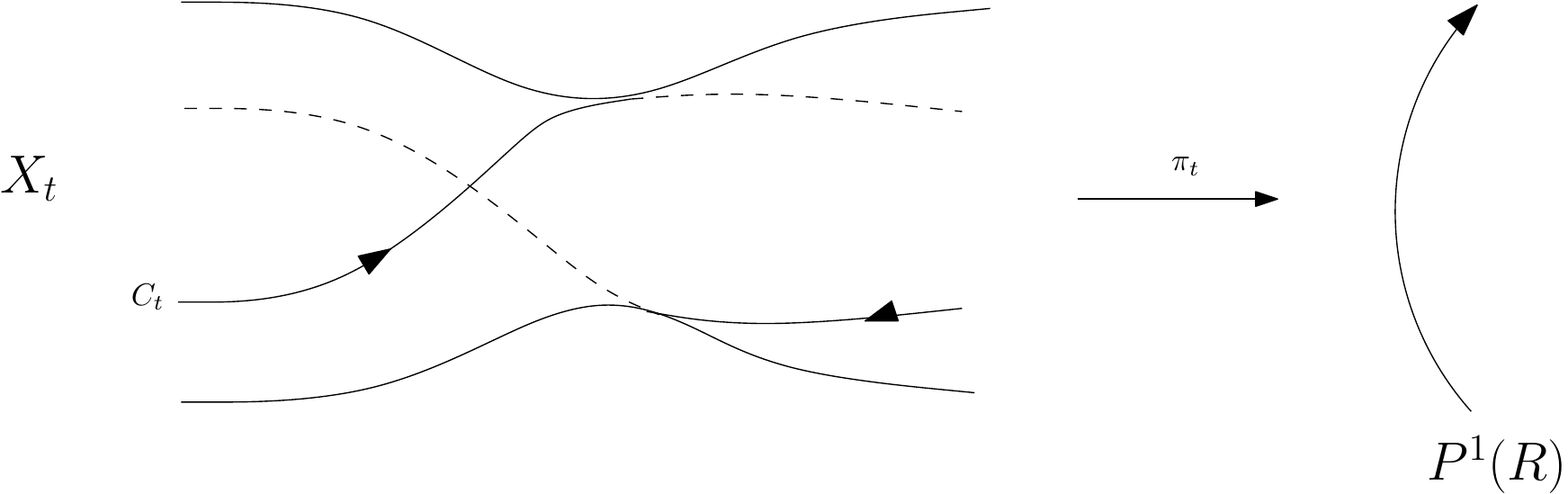}
\caption{Construction I, deformation without real ramification}
\label{Figure 3}
\end{center}
\end{figure}

In both cases $X_t(\mathbb{C})\backslash X_t(\mathbb{R})$ is
connected if and only if $X(\mathbb{C})\backslash X(\mathbb{R})$ is
connected.

\subsection{Construction II}\label{subsection1.2}

Let $Y$ be a real curve of genus $g$ and let $\pi :Y\rightarrow
\mathbb{P}^1$ be a morphism of degree $k$ defined over $\mathbb{R}$.
Let $Q\in \mathbb{P}^1(\mathbb{R})$ not a branch point of $\pi$ and
assume $P+\overline{P}\subset\pi^{-1}(Q)$, a non-real point on $Y$
(in particular we assume such non-real point exists). Let $X_0$ be
the singular curve obtained from $X$ by identifying $P$ with
$\overline{P}$. This singular curve has a natural morphism of degree
$k$ to $\mathbb{P}^1$ defined over $\mathbb{R}$ obtained from $\pi$
(see Figure \ref{Figure 4}).

\begin{figure}[h]
\begin{center}
\includegraphics[height=3 cm]{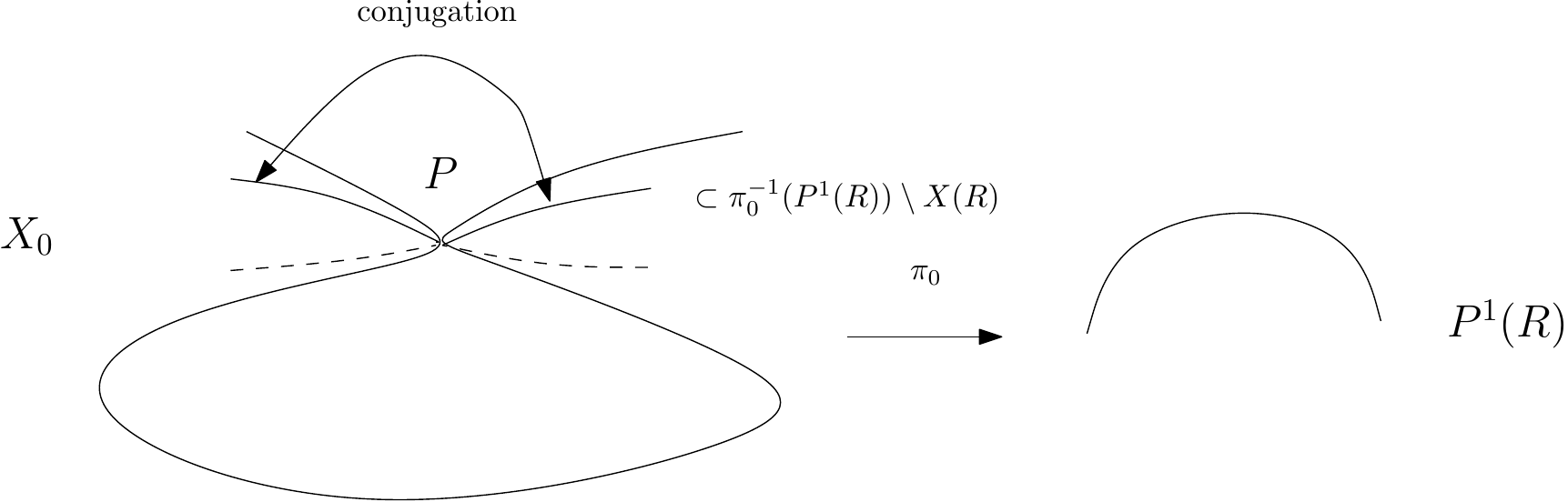}
\caption{Construction II, the curve $X_0$}\label{Figure 4}
\end{center}
\end{figure}

On $X_0$ the singularity (again denoted by $P$) is an isolated real
point. Locally at $P$ this situation can be described inside
$\mathbb{C}^2$ such that the curve has equation $x^2+y^2=0$ and the
morphism is locally defined by $(x,y)\rightarrow x$ with coordinates
compatible with the real structure on $X_0$. We use $U$ and $V$ as
in Construction I. Consider the local deformation of $X_0$ at $P$
defined by $x^2+y^2=t$ with $t\in \mathbb{R}$ ($|t|$ very small). A
point $Q\in U_0\setminus V_0$ has coordinates $(x,ix)$ or $(x,-ix)$
and $\sigma (Q)$ corresponds to resp.
$(\overline{x},-i\overline{x})$ or $(\overline{x},i\overline{x})$.
We identify $(x,ix)$ (resp. $(x,-ix))$) on $U_0 \setminus V_0$ with
$(x,_{ix}\sqrt{-x^2+t})$ (resp. $(x,-_{ix}\sqrt{-x^2+t})$) on $U_t$.
This defines a Riemann surface $X_t(\mathbb{C})$. On $U_t$ we define
$\sigma _t(x,y)=(\overline{x},\overline{y})$ and we check that it
behaves well under the identification. Consider
$Q=(x,_{ix}\sqrt{-x^2+t})$ on $U_t\setminus V_t$. Since
$_{ix}\sqrt{-x^2+t}$ is close to $ix$ one has
$\overline{_{ix}\sqrt{-x^2+t}}$ is close to $-i\overline{x}$, hence
$\overline{_{ix}\sqrt{-x^2+t}}=-_{i\overline{x}}\sqrt{-\overline{x}^2+t}$.
Hence $\sigma _t(Q)$ is identified with
$(\overline{x},-i\overline{x})=\sigma _0(x,ix)$. We obtain a smooth
real curve $X_t$ of genus $g+1$ together with a morphism
$\pi_t:X_t\rightarrow \mathbb{P}^1$ of degree $k$ defined over
$\mathbb{R}$.

In case $t>0$ there is a new component $C_t$ of $X_t(\mathbb{R})$
close to $P$ and under $\pi_t$ this maps to the interval
$[-\sqrt{t},\sqrt{t}]$. In particular $\delta_{C_t}(\pi_t)=0$. We
call this \emph{the deformation with real ramification}. In this
case $X_t(\mathbb{C})\backslash X_t(\mathbb{R})$ is connected if and
only if $X(\mathbb{C})\backslash X(\mathbb{R})$ is connected (see
Figure \ref{Figure 5}).

\begin{figure}[h]
\begin{center}
\includegraphics[height=4 cm]{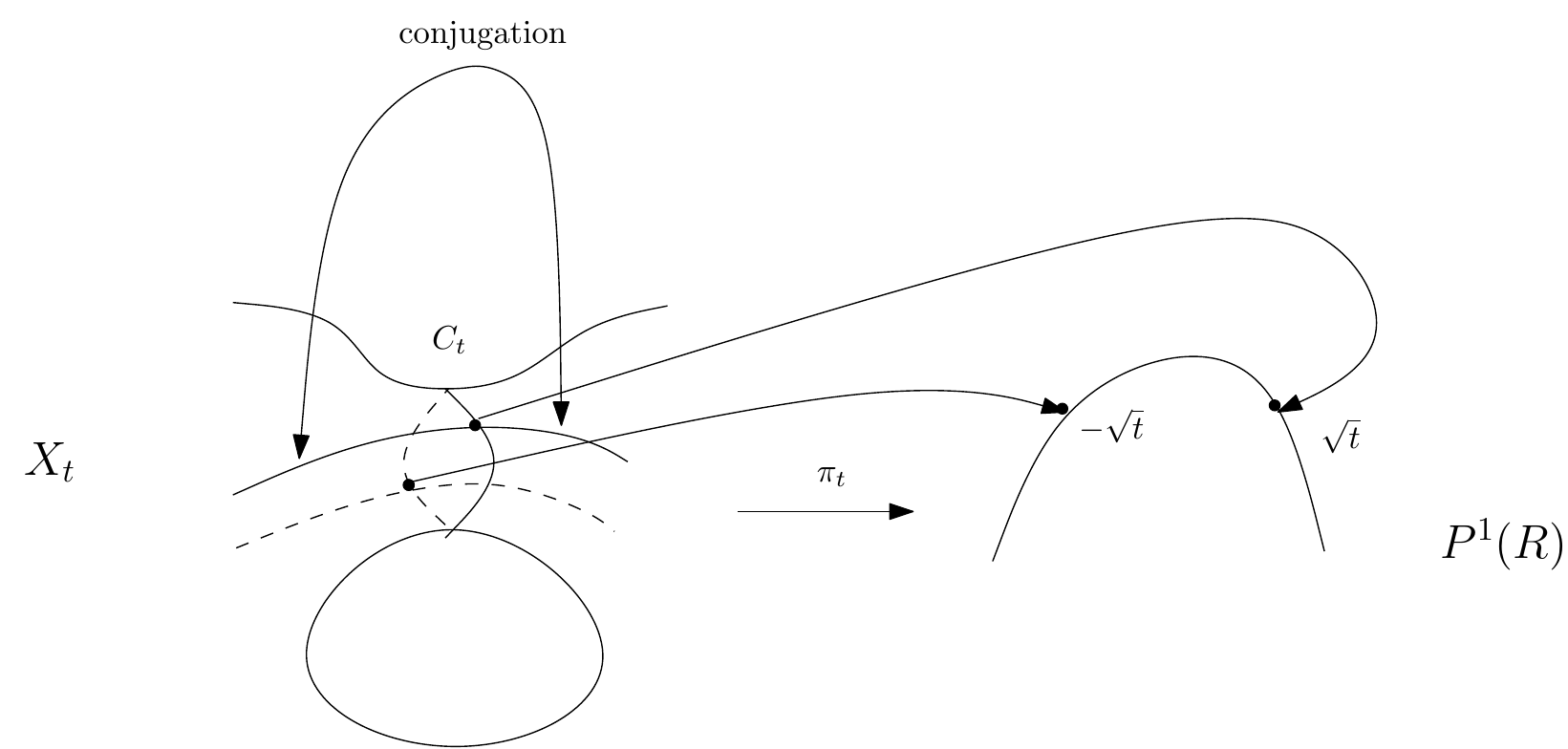}
\caption{Construction II, deformation with real ramification}
\label{Figure 5}
\end{center}
\end{figure}

In case $t<0$ then $X_t(\mathbb{R})$ contains no real point close to
$P$. We call this the deformation without real ramification. In this
case $X_t(\mathbb{C})\backslash X_t(\mathbb{R})$ is always connected
(see Figure \ref{Figure 6}).

\begin{figure}[h]
\begin{center}
\includegraphics[height=4 cm]{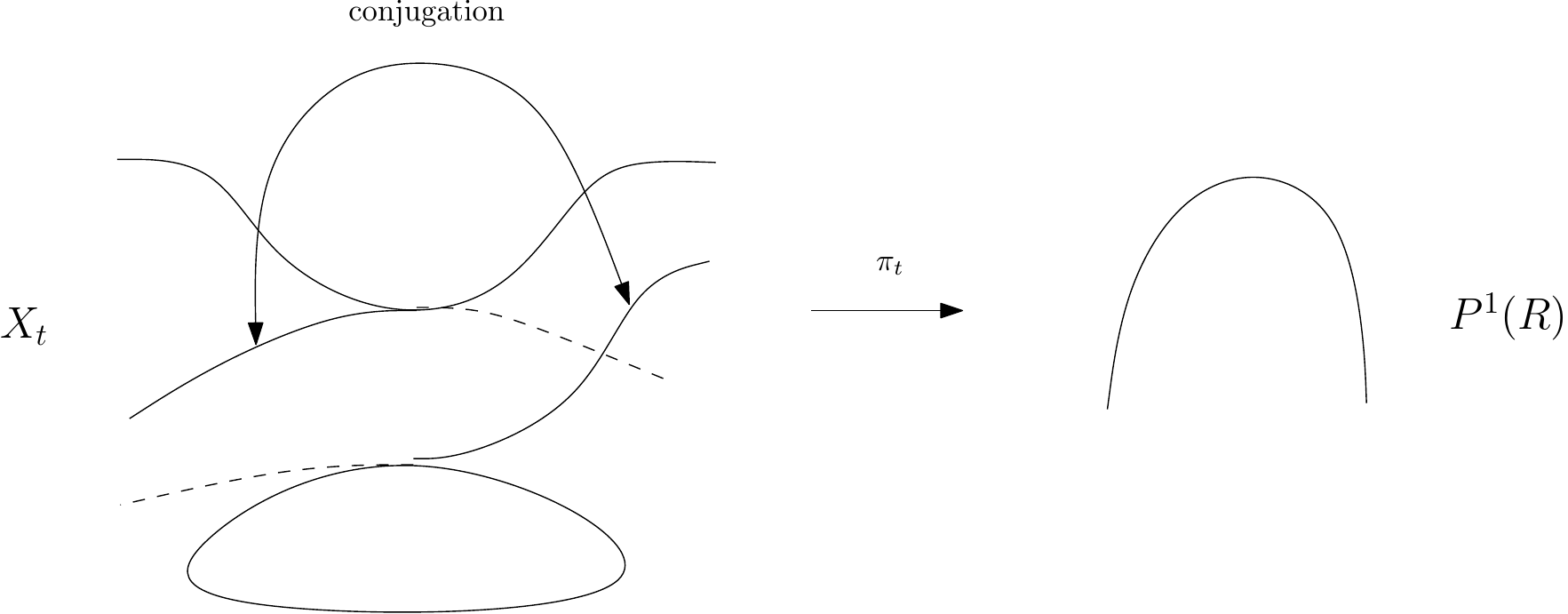}
\caption{Construction II, deformation without real ramification}
\label{Figure 6}
\end{center}
\end{figure}

\subsection{Construction III}\label{subsection1.3}

Let $Y$ be a real curve of genus $g$ and let $\pi :Y\rightarrow
\mathbb{P}^1$ be a morphism of degree $k$ defined over $\mathbb{R}$.
Assume $P+\overline{P}$ is a non-real point of $Y$ such that $\pi
(P)\neq \pi (\overline{P})$ and $P$, $\overline{P}$ are not
ramification points of $\pi _{\mathbb{C}}$. Now we let $X_0=Y\cup
_{P+\overline{P}}\mathbb{P}^1$ be the union of $Y$ and
$\mathbb{P}^1$ identifying $P$ with $\pi (P)$ (still denoted by $P$)
and $\overline{P}$ with $\pi (\overline{P})$ (still denoted by
$\overline{P}$). This singular curve has a natural morphism $\pi _0$
defined over $\mathbb{R}$ of degree $k+1$ to $\mathbb{P}^1$ having
restriction $\pi$ to $Y$ and the identity to $\mathbb{P}^1$ (see
Figure \ref{Figure 7}).

\begin{figure}[h]
\begin{center}
\includegraphics[height=4 cm]{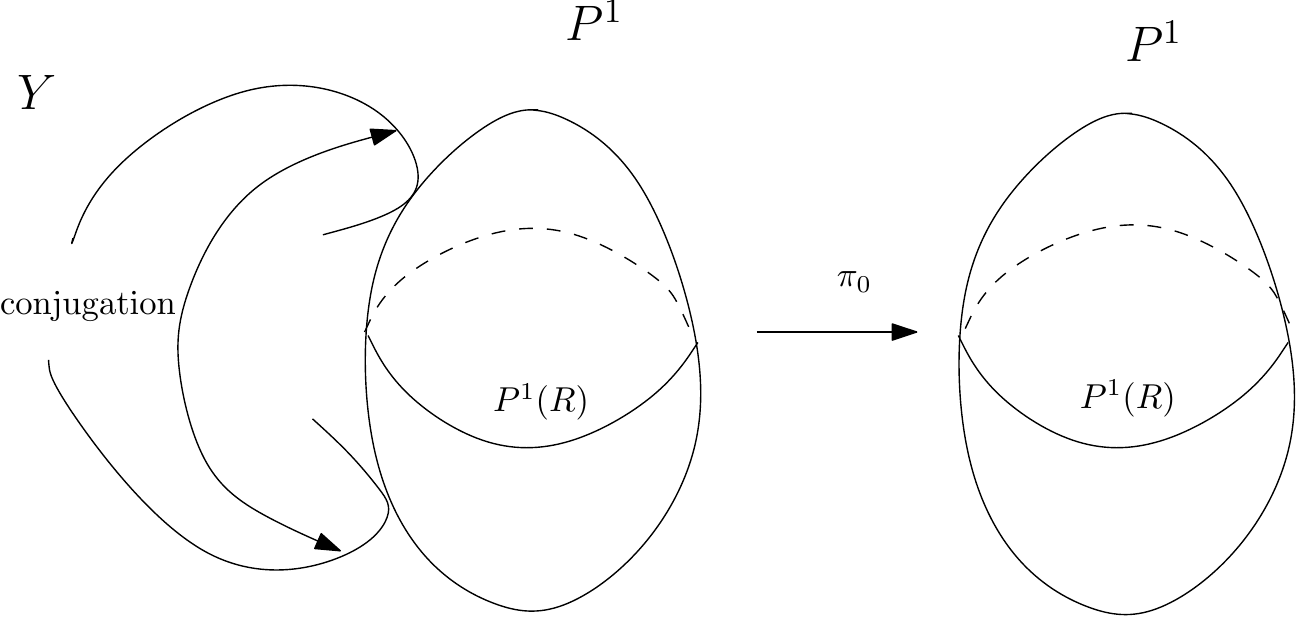}
\caption{Construction III, the curve $X_0$} \label{Figure 7}
\end{center}
\end{figure}

Choose a local deformation over $\mathbb{C}$ making it smooth at $P$
and take the complex conjugated local deformation at $\overline{P}$
and glue it with the remaining part of $X_0$. We obtain a smooth
real curve $X_t$ of genus $g+1$ together with a morphism
$\pi_t:X_t\rightarrow \mathbb{P}^1$ of degree $k+1$ defined over
$\mathbb{R}$. One has $g(X_t)=g+1$ and $\mathbb{P}^1(\mathbb{R})$ on
$X\cup _{P+\overline{P}}\mathbb{P}^1$ deforms to a connected
component $C_t$ of $X_t(\mathbb{R})$ such that
$\delta_{C_t}(\pi_t)=1$. For this construction
$X_t(\mathbb{C})\backslash X_t(\mathbb{R})$ is connected if and only
if $X(\mathbb{C})\backslash X(\mathbb{R})$ is connected (see Figure
\ref{Figure 8}).

\begin{figure}[h]
\begin{center}
\includegraphics[height=4 cm]{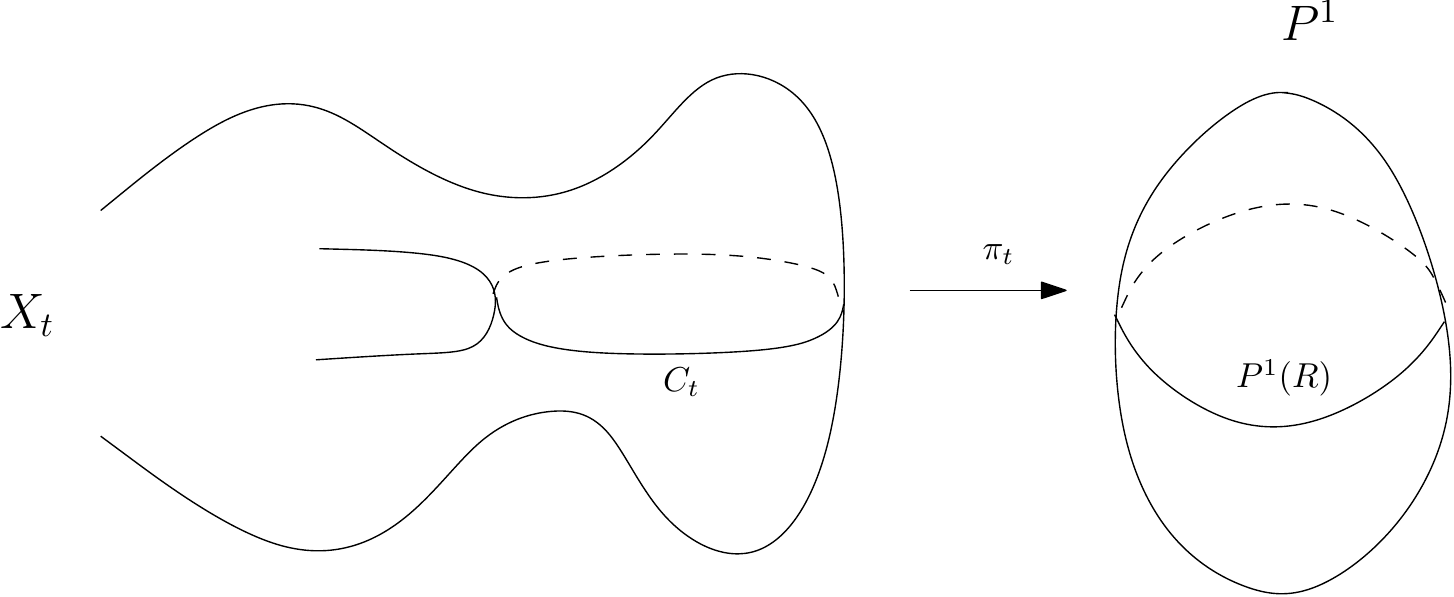}
\caption{Construction III, deformation} \label{Figure 8}
\end{center}
\end{figure}

\subsection{Construction IV}\label{subsection1.4}

Consider a double covering $\tau_0:R_0\rightarrow \mathbb{P}^1$
defined over $\mathbb{R}$. Assume $Y$ is a real curve without real
points and $\pi:Y\rightarrow \mathbb{P}^1$ is a morphism of degree
$k$ defined over $\mathbb{R}$ (in particular $k$ is even). Choose
$P+\overline{P}$ on $\mathbb{P}^1$ (a non-real point) such that $P$,
$\overline{P}$ are not branch points of $\pi _{\mathbb{C}}$ and
choose non-real points $P_0+\overline{P_0}$ on $R_0$ and
$Q+\overline{Q}$ on $Y$ such that
$\tau_0(\mathbb{C})(P_0)=\pi(\mathbb{C})(Q)=P$. Consider the nodal
curve $X_0=Y\cup_{P+\overline{P}}R_0$ obtained by identifying $P_0$
to $Q$ (still denoted by $Q$) and $\overline{P_0}$ to $\overline{Q}$
(still denoted by $\overline{Q}$). This is a singular curve defined
over $\mathbb{R}$ without real points and $\pi$ together with
$\tau_0$ induce a morphism of that singular curve to $\mathbb{P}^1$
defined over $\mathbb{R}$. Take a local deformation at $Q$ over
$\mathbb{C}$ smoothing the singularity, take the complex conjugated
deformation at $\overline{Q}$ and glue it with the remaining part of
$X_0$ to obtain a smooth real curve $X_t$ of genus $g+1$ together
with a morphism $\pi_t:X_t\rightarrow \mathbb{P}^1$ of degree $k+2$.
This smooth real curve does not have any real point.

\subsection{Construction V}\label{subsection1.5}

Assume $Y$ is a real curve of genus $g$ without real points and
assume $\pi :Y\rightarrow R_0$ is a morphism of degree $k$ defined
over $\mathbb{R}$. Choose a non-real point $P_0+\overline{P_0}$ on
$R_0$ such that $P_0$, $\overline{P_0}$ are not branch points of
$\pi _{\mathbb{C}}$ and a non-real point $P+\overline{P}$ on $Y$
with $\pi (P)=P_0$. Consider the nodal curve $X_0=Y\cup _{P+P_0}R_0$
obtained by identifying $P_0$ to $P$ (still denoted by $P$) and
$\overline{P_0}$ to $\overline{P}$ (still denoted by
$\overline{P}$). This is a singular curve defined over $\mathbb{R}$
without real points and $\pi$ together with the identity $1_{R_0}$
induces a morphism of degree $k+1$ of that singular curve to $R_0$
defined over $\mathbb{R}$. Take a local deformation at $P$ over
$\mathbb{C}$ smoothing the singularity, take the complex conjugated
deformation at $\overline{P}$ and glue it with the remaining part of
$X_0$ to obtain a smooth real curve $X_t$ of genus $g+1$ together
with a morphism $\pi _t:X_t\rightarrow R_0$ of degree $k+1$.

\section{Existence of real curves having pencils with prescribed
topological properties}\label{section2}

Let $X$ be a real curve of topological type $(g,s,a)$ and let $\pi
:X\rightarrow \mathbb{P}^1$ be a morphism of degree $k$ defined over
$\mathbb{R}$. Assume $s\geq 1$ and let
$\underline{\delta}=(\delta_1, \cdots , \delta_s)$ be the
topological degree of $\pi$. Of course we need $\sum_{i=1}^s\delta
_i\leq k$. For each $s\in \mathbb{P}^1(\mathbb{R})$ the number of
real points in $\pi ^{-1}(s)$ counted with multiplicity has the same
parity as $\sum_{i=1}^s\delta _i$. Since the non-real points of $X$
are pairs of conjugated points on $X(\mathbb{C})$ it follows $k-\sum
_{i=1}^s \delta _i \equiv 0 \pmod{2}$. In case $\sum _{i=1}^s \delta
_i=k$ then for each $s\in \mathbb{P}^1(\mathbb{R})$ the fiber $\pi
^{-1}(s)$ has exactly $\delta _i$ different points on $C_i$. Since
some fiber needs to contain points of $C_s$ one has $\delta _s\geq
1$ in that case, hence $\sum_{i=1}^s \delta _i \leq k-2$ in case
$\delta _s=0$ (see also \cite{ref15}).

\begin{definition}\label{definition1}
We say $\delta _1\geq \cdots \geq \delta _s\geq 0$ is an
\emph{admissible topological degree for morphisms of degree $k$} if
\begin{itemize}
\item $\sum_{i=1}^s \delta _i\leq k$,
\item $k-\sum_{i=1}^s \delta _i \equiv 0 \pmod{2}$,
\item $\sum_{i=1}^s \delta _i\leq k-2$ in case $\delta _s=0$.
\end{itemize}
\end{definition}

For the next theorem we need some more restriction. In case there is
a morphism $\pi :X\rightarrow \mathbb{P}^1$ of degree $k$ with $\sum
_{i=1}^s \delta _i =k$ then by definition $\pi ^{-1} \left(
\mathbb{P}^1(\mathbb{R}) \right)=X(\mathbb{R})$, hence
$X(\mathbb{C})\setminus X(\mathbb{R})=\pi ^{-1} \left(
\mathbb{P}^1(\mathbb{C}) \setminus \mathbb{P}^1 (\mathbb{R})
\right)$. Since $\mathbb{P}^1(\mathbb{C}) \setminus
\mathbb{P}^1(\mathbb{R})$ is disconnected it follows $X(\mathbb{C})
\setminus X(\mathbb{R})$ is disconnected too, hence $a(X)=0$.
Therefore, in case $a(X)=1$ one always has $\sum _{i=1}^s \delta _i
\leq k-2$.

\begin{theorem}\label{theorem2}
Let $k\geq 3$, let $(g,s,a)$ be an admissable topological type of
real curves and let $\underline{\delta}$ be an admissible
topological degree for morphisms of degree $k$. Assume $\sum
_{i=1}^s \delta _i \leq k-2$ in case $a=1$. There exists a real
curve $X$ of topological type $(g,s,a)$ having a morphism $\pi
:X\rightarrow \mathbb{P}^1$ of degree $k$ of topological degree
$\underline{\delta}$.
\end{theorem}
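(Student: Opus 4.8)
The plan is to argue by a double induction, building the desired covering from simpler ones by the five constructions of Section~\ref{section1}: the outer induction is on the genus $g$, the inner one (at fixed $g$) on $\sum_i\delta_i$. The argument bottoms out at the genus-$0$ situations together with one extra family discussed below. For $(g,s,a)=(0,1,0)$ every admissible $\underline{\delta}=(\delta_1)$ is realized directly, since a real rational map $\mathbb{P}^1\to\mathbb{P}^1$ of degree $k$ with prescribed winding number $\delta_1$ on $\mathbb{P}^1(\mathbb{R})$ exists whenever $0\le\delta_1\le k$, $\delta_1\equiv k\pmod 2$, and $\delta_1=0$ forces $k$ even; one builds it from $\delta_1$ real and $(k-\delta_1)/2$ conjugate pairs of branch data. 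For $s=0$ (which forces $a=1$ and $k$ even) one starts from a degree-$k$ covering $R_0\to\mathbb{P}^1$ (the double cover $\tau_0$ followed by a degree-$k/2$ self-map of $\mathbb{P}^1$) and raises the genus to $g$ by applying Construction~II in its deformation-without-real-ramification $g$ times; this is legitimate because every fibre of an $s=0$ covering over a real point is totally non-real, so the required conjugate pair always exists (alternatively Construction~IV raises the degree).

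For the inductive step with $s\ge 1$ I would reduce a target $(g,s,a,k,\underline{\delta})$ to genus $g-1$ whenever the reduced topological type is itself admissible: if some $\delta_i=1$, realize the target as Construction~III applied to $(g-1,s-1,a,k-1,\underline{\delta}\setminus\{1\})$; if some $\delta_i=0$, realize it as Construction~II-with-real-ramification applied to $(g-1,s-1,a,k,\underline{\delta}\setminus\{0\})$. Both moves preserve $a$ and append one oval of the correct topological degree. When every $\delta_i\ge 2$, the inner induction runs first: Construction~I-without-real-ramification produces the target from $(g,s,a,k-1,(\dots,\delta_s-1))$, which has strictly smaller $\sum_i\delta_i$ and stays admissible; iterating lowers $\delta_s$ to $1$, after which Construction~III drops the genus. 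Construction~I moreover lets me, at fixed $g$, pass between all admissible $(k,\underline{\delta})$ of a given type, so it suffices to reach any one of them at each stage. For $a=1$ these removals bring $s$ down to $0$ and hence to the $R_0$-case already settled, so no genuinely new base curve is needed there.

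The essential obstruction is the case $a=0$: none of the five constructions raises the genus of a dividing curve without creating a new oval (Construction~II-without-ramification is the only genus-raising move leaving $s$ fixed, and it forces $a=1$), so $g-s$ is invariant under every admissible reduction and remains odd. Hence the reductions terminate not at $\mathbb{P}^1$ but at the family of \emph{irreducible base cases} $(g,s,a)=(2m,1,0)$, $m\ge 0$, which must be produced by hand. I would construct these as real doubles: a degree-$k$ branched covering of the closed disc by a genus-$m$ surface with one boundary circle, the boundary mapping to the boundary circle with degree $k$, doubles across its boundary to a morphism $X\to\mathbb{P}^1$ with $X$ of topological type $(2m,1,0)$ and separating pencil $\underline{\delta}=(k)$; Construction~I then moves $\delta_1$ and $k$ to the required admissible value. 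Establishing the existence of such a bordered branched cover (a Hurwitz-type existence statement for surfaces with boundary) is the main point to nail down.

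Two bookkeeping threads run through the whole argument and I would treat them carefully. First, at each reduction one checks that $(g-1,s-1,a)$ is an admissible topological type and that the truncated sequence is admissible for the lowered $k$; the parity condition and the clause ``$\sum_i\delta_i\le k-2$ when $\delta_s=0$'' are precisely what make these checks go through, and in particular they force $e=k-\sum_i\delta_i\ge 2$ exactly in the situations (a zero among the $\delta_i$, or $a=1$) where Constructions~II and~II-with-ramification are invoked. Second, those constructions demand an honest non-real point lying over a real point of $\mathbb{P}^1$, that is, a conjugate pair in some totally real fibre; that $e\ge 2$ forces such a pair for a sufficiently general covering, and that the ensuing smoothings remain inside the prescribed stratum, is where the smoothness of the universal spaces of morphisms and the Horikawa deformation theory underlying Section~\ref{section1} enter. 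I expect the bordered-cover base cases and this genericity step to be the hard part; the rest is disciplined induction and admissibility arithmetic.
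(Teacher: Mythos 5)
Your inductive machinery is sound in outline and uses the same toolkit as the paper: the admissibility checks for your three reduction moves (strip a $\delta_i=1$ via Construction III, strip a $\delta_i=0$ via Construction II with real ramification, lower $\delta_s$ and $k$ together via Construction I without real ramification) all go through, and your observation that $g-s$ is invariant under every $a$-preserving move, so that for dividing curves the reductions terminate in the family $(2m,1,0)$, is correct. But that is exactly where the proof has a genuine hole: you produce these irreducible base cases by doubling a degree-$k$ branched cover of the closed disc by a genus-$m$ bordered surface, and you explicitly leave the existence of such bordered covers (``a Hurwitz-type existence statement for surfaces with boundary'') unproved. This is not an incidental detail: it is the only step in your argument that manufactures dividing curves with one oval, so it carries the entire $a=0$ case. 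The paper closes precisely this hole differently, and more cheaply: it does not induct downward at all, but synthesizes every target in finitely many explicit cases starting from real hyperelliptic curves of prescribed topological type, whose existence together with the topological degree of the hyperelliptic map ($\delta=2$ on the single oval for type $(2m,1,0)$, $\delta=1$ on each oval for type $(g_0,2,0)$, etc.) is a known result of Gross--Harris (\cite{ref2}, Section 6). Note that the $k=2$ instance of your missing lemma \emph{is} that hyperelliptic existence statement; since Construction I lets you raise $k$ and adjust $\delta_1$ from a degree-$2$ seed, your induction would close completely if you replaced your degree-$k$ seeds with $(k,\delta_1)=(2,2)$ and cited \cite{ref2} --- which you almost do, but do not.

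Two further points need repair even after the base case is fixed. First, your claim that Construction I lets you ``pass between all admissible $(k,\underline{\delta})$ of a given type'' is false as stated: Construction I only \emph{increases} $k$ (by one, changing $\delta_C$ by $\pm1$), so you can only move upward from a seed. This is enough for your purposes, but it makes the order of your reductions matter: for instance, for the target $(g,s,a)=(2m+1,2,0)$, $k=3$, $\underline{\delta}=(1,0)$, stripping the $1$ first demands a seed $(k,\delta_1)=(2,0)$ on a dividing one-oval curve, which does not exist (cf.\ Remark \ref{remarktheorem2}: for $k=2$ and $a=0$ there are extra restrictions); you must strip the $0$ first, keeping $k=3$. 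Your proposal leaves this priority unspecified. Second, no Horikawa theory or smoothness of universal spaces of morphisms is needed for this theorem: the conjugate pair over a real non-branch point required by Construction II exists for \emph{every} covering with $\sum_i\delta_i<k$, not just a generic one, since $\pi^{-1}\left(\mathbb{P}^1(\mathbb{R})\right)=X(\mathbb{R})$ would force $X(\mathbb{R})\rightarrow\mathbb{P}^1(\mathbb{R})$ to be an unramified covering map and hence $\sum_i\delta_i=k$; and the smoothings themselves are the explicit local gluings of Section \ref{section1}. Horikawa theory enters the paper only in the Brill--Noether arguments of Section 3, so invoking it here misattributes where the real work lies.
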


\begin{remark}\label{remarktheorem2}
The case $k=2$ is contained in e.g. \cite{ref2}*{Section 6}. In that
case there are more restrictions in the case $a=0$. Nevertheless,
the existence of real hyperelliptic cruves is the starting point in
a lot of cases of the proof of the theorem, even in Case 5 which is
excluded for real hyperelliptic curves unless $s=g+1$.
\end{remark}

Section \ref{subsection2.1} is the proof of Theorem \ref{theorem2}
in case $a=1$ and Section \ref{subsection2.2} is the proof of
Theorem \ref{theorem2} in case $a=0$. In Section \ref{subsection2.3}
we prove the corresponding theorem for morphisms to $R_0$.

\subsection{The case $a=1$}\label{subsection2.1}

First we consider the case $s\neq 0$. Let $s'$ be the number of
$\delta _i\neq 0$ and let $\widetilde{X}$ be a real hyperelliptic
curve of topological type $(g-s',s-s',1)$ (such curves do exist, see
\cite{ref2}*{Section 6}). Let $\widetilde{\pi}$ be the associated
double covering of $\mathbb{P}^1$ then it follows that
$\delta_{\widetilde{C}_i}(\widetilde{\pi})=0$ for $1\leq i\leq
s-s'$. Applying Construction III $s'$ times we obtain
$\pi_1:X_1\rightarrow \mathbb{P}^1$ with $\deg (\pi_1)=2+s'$;
$g(X_1)=g$ and $X_1(\mathbb{R})$ has $s$ components $C_1^1,
\cdots,C_s^1$ such that $\delta_{C_i^1}(\pi_1)=1$ for $1\leq i\leq
s'$ and $\delta_{C_i^1}(\pi_1)=0$ for $s'+1\leq i\leq s$, in
particular $X_1$ has topological type $(g,s,1)$. For each $1\leq
i\leq s'$ we use Construction I $\delta_i-1$ times on $C^1_i$ and
using the deformation without real ramification. We obtain $\pi_2:
X_2\rightarrow \mathbb{P}^1$ with $\deg
(\pi_2)=2+\sum_{i=1}^{s'}\delta_i$, $g(X_2)=g$ and $X_2(\mathbb{R})$
has $s$ components $C_1^2, \cdots, C_s^2$ such that
$\delta_{C_i^2}(\pi_2)=\delta_i$ for $1\leq i\leq s'$ and
$\delta_{C_i^2}(\pi_2)=0$ for $s'+1\leq i\leq s$ hence $\pi _2$ has
topological degrees $\underline{\delta}$ and $X_2$ has topological
type $(g,s,1)$. In case $s\neq s'$ we apply Construction I
$k-2-\sum_{i=1}^{s'}\delta_i$ times on $C_{s'+1}^2$ using the
deformation with real ramification. In case $s=s'$ and $s'\neq 0$ we
apply Construction I $k-2-\sum_{i=1}^{s'}\delta_i$ times on $C_1^2$
alternating the deformation with and without real ramification. We
obtain a real curve $X$ of topological type $(g,s,1)$ and a real
morphism $\pi :X\rightarrow \mathbb{P}^1$ having degree $k$ and
topological degree $\underline{\delta}$. Indeed, in this final step
the degrees of the restrictions of $\pi$ do not change since
$k-2-\sum_{i=1}^{s'}\delta_i$ is even.

Now assume $s=0$ (the case of no real points). In this case $k$ is
even. In case $g<k$ a general divisor $D$ on a real curve $X$ of
degree $k$ gives rise to a base point free linear system $\mid
D\mid$. A general pencil in it defined over $\mathbb{R}$ gives rise
to a real base point free $g^1_k$ on $X$, hence to a real morphism
$\pi :X\rightarrow \mathbb{P}^1$ of degree $k$. So we can assume
$g\geq k$. Start with a hyperelliptic curve $\widetilde{X}$ of genus
$g-(k/2)+1$ with $\widetilde{X}(\mathbb{R})=\emptyset $. We apply
Construction IV $(k/2)-1$ times. Again we obtain a real curve $X$ of
genus $g$ with $X(\mathbb{R})=\emptyset$ and having a real morphism
$\pi :X\rightarrow \mathbb{P}^1$ of degree $k$.

\subsection{The case $a=0$}\label{subsection2.2}

Again, let $s'$ be the number of $\delta _i\neq 0$.

\subsubsection{Case 1}

Assume $s'=s=1$ and $\delta_1=k$. In particular $g\equiv 0$ (mod 2).
Let $X_0$ be a real hyperelliptic curve of genus $g$ having
$a(X_0)=0$, $s(X_0)=1$, hence for the hyperelliptic real morphism
$\pi_0 :X_0\rightarrow \mathbb{P}^1$ the unique component $C_1$ of
$X_0(\mathbb{R})$ has topological degree 2 (see \cite{ref2}*{Section
6}). Apply $k-2$ times Construction I using the deformation without
real ramification. We obtain a real curve $X$ of topological type
$(g,1,0)$ such that $X$ has a covering $\pi : X\rightarrow
\mathbb{P}^1$ defined over $\mathbb{R}$ of degree $k$ such that
$\delta_{C_1}(\pi)=k$.

\subsubsection{Case 2}

Assume $s>1$ and $\sum_{i=1}^{s}\delta_i=k$ in which case $s'=s$. In
case all $\delta_i=1$ then $s=k$ and therefore $g\equiv k+1$ (mod
2). In particular $g-k+2$ is odd. Since $k=s$ and $s\leq g+1$ also
$g-k+2\geq 1$. Let $X_0$ be a real hyperelliptic curve of genus
$g_0=g-k+2$ having $a(X_0)=0$, $s(X_0)=2$ (see \cite{ref2}*{Section
6}). Hence, for the real hyperelliptic morphism $\pi_0 :
X_0\rightarrow \mathbb{P}^1$ the 2 components $C$ of
$X_0(\mathbb{R})$ satisfy $\delta_C(\pi_0)=1$. Apply $k-2$ times
Construction III. We obtain a real curve $X$ of genus $g$ having
$a(X)=0$, $s(X)=k$ such that $X$ has a covering $\pi :X\rightarrow
\mathbb{P}^1$ defined over $\mathbb{R}$ of degree $k$ such that each
component $C$ of $X(\mathbb{R})$ satisfies $\delta_C(\pi)=1$.

Assume not all $\delta_i=1$, hence $\delta_1\geq 2$. Let $X_0$ be a
real hyperelliptic curve of genus $g_0=g-s+1$ (in particular $g_0$
is even) as in Case 1. Apply $s-1$ times Construction III. We obtain
a real curve $X_1$ of genus $g$ having $a(X)=0$, $s(X)=s$ such that
$X_1$ has a covering $\pi_1 :X_1\rightarrow \mathbb{P}^1$ defined
over $\mathbb{R}$ of degree $s+1$ such that exactly one component
$C_1$ of $X_1(\mathbb{R})$ satisfies $\delta_{C_1}(\pi_1)=2$ while
the other components $C_2, \cdots, C_s$ satisfy
$\delta_{C_i}(\pi_1)=1$. We apply $\delta_1-2$ times Construction I
without real ramification starting with $C_1$ and for $2\leq i\leq
s$ we apply $\delta_i-1$ times Construction I without real
ramification starting with $C_i$. We obtain a real curve $X$ of
genus $g$ satisfying $a(X)=0$, $s(X)=s$ such that $X$ has a covering
$\pi : X\rightarrow \mathbb{P}^1$ defined over $\mathbb{R}$ of
degree
$s+1+(\delta_1-2)+\sum_{i=2}^s(\delta_i-1)=\sum_{i=1}^s\delta_i=k$
having topological degree $\underline{\delta}$.

\subsubsection{Case 3}

Assume $\sum_{i=1}^{s'}\delta_i<k$ and $s'=s$. It follows that
$s'\geq 1$ and $g\equiv s'+1$ (mod 2). In particular $g-s'+1$ is
even. Let $X_0$ be a real hyperelliptic curve of genus $g-s'+1$ such
that $a(X_0)=0$, $s(X_0)=1$ and for the real hyperelliptic morphism
$\pi_0 : X_0\rightarrow \mathbb{P}^1$ one has that the unique
component $C_1$ of $X_0(\mathbb{R})$ satisfies
$\delta_{C_1}(\pi_0)=2$ (see \cite{ref2}*{Section 6}). Apply
Construction I using the deformation with real ramification to $C_1$
and apply $s'-1$ times Construction III. We obtain a real curve
$X_1$ of genus $g$ having $a(X_1)=0$, $s(X_1)=s'=s$ such that there
is a morphism $\pi_1 : X_1\rightarrow \mathbb{P}^1$ of degree $s'+2$
defined over $\mathbb{R}$ such that each component $C_i$ of
$X_1(\mathbb{R})$ satisfies $\delta_{C_i}(\pi_1)=1$.

For $1\leq i\leq s'$ apply $\delta_i-1$ times Construction I using
the deformation without real ramification starting with $C_i$. We
obtain a real curve $X_2$ of genus $g$ having $a(X_2)=0$,
$s(X_2)=s'=s$ and such that there is a morphism $\pi_2 :
X_2\rightarrow \mathbb{P}^1$ of degree
$s'+2+(\sum_{i=1}^s\delta_i)-s'=(\sum_{i=1}^s\delta_i)+2$ of
topological degree $\underline{\delta}$.

By assumption $k-(\sum_{i=1}^s\delta_i)-2$ is an even non-negative
integer. Apply $k-(\sum_{i=1}^s\delta_i)-2$ times Construction I
starting with $C_1$ alternating deformation with and without real
ramification (starting with real ramification) we end up with a real
curve $X$ of genus $g$ satisfying $a(X)=0$, $s(X)=s'=s$ and such
that there is a morphism $\pi :X\rightarrow \mathbb{P}^1$ of degree
$k$ of topological degree $\underline{\delta}$.

\subsubsection{Case 4}

Assume $\sum_{i=1}^{s'}\delta_i<k$, $s>s'$ and $s'\geq 1$. We start
using the construction of Case 3 making a real curve $X'$ of genus
$g-s+s'$ satisfying $a(X')=0$, $s(X')=s'$ and such that there is a
real morphism $\pi' :X'\rightarrow \mathbb{P}^1$ of degree $k$ such
that, for the components $C_1, \cdots, C_{s'}$ one has
$\delta_{C_i}(\pi')=\delta_i$. Notice that, since $g-s\equiv 1$ (mod
2) one has $g-s+s'\equiv s'+1$ (mod 2). Since
$\sum_{i=1}^{s'}\delta_i<k$ the set
$\pi^{-1}(\mathbb{P}^1(\mathbb{R}))\setminus X'(\mathbb{R})$ is not
empty. Therefore we can apply Construction II $s-s'$ times using a
deformation with real ramification. We obtain a real curve $X$ of
genus $g$ such that $a(X)=0$, $s(X)=s$ and $X$ has a real covering
$\pi :X\rightarrow \mathbb{P}^1$ of degree $k$ of topological degree
$\underline{\delta}$.

\subsubsection{Case 5}

Assume $s'=0$. In this case $k$ is even. Let $X_0$ be a
hyperelliptic curve of genus $g-s+1$ (hence of even genus) such that
$a(X_0)=0$, $s(X_0)=1$ and if $\pi_0 : X_0\rightarrow \mathbb{P}^1$
is the hyperelliptic morphism then the unique component $C_1$ of
$X_0(\mathbb{R})$ satisfies $\delta_{C_1}(\pi_0)=2$ (see
\cite{ref2}*{Section 6}). Apply $k-2$ times Construction I using the
deformation with real ramification. One obtains a real curve $X_1$
of genus $g-s+1$ such that $a(X_1)=0$ and $s(X_1)=1$ and a morphism
$\pi_1 : X_1\rightarrow \mathbb{P}^1$ of degree $k$ such that for
the unique component $C_1$ of $X_1(\mathbb{R})$ one has
$\delta_{C_1}(\pi_1)=0$. It follows that
$(\pi_1)^{-1}(\mathbb{P}^1(\mathbb{R}))\setminus X_1(\mathbb{R})\neq
\emptyset$ and so we apply $s-1$ times Construction II with real
ramification. We obtain a real curve $X$ of genus $g$ such that
$a(X)=0$, $s(X)=s$ and there exists a real morphism $\pi
:X\rightarrow \mathbb{P}^1$ of degree $k$ such that for each
component $C$ of $X(\mathbb{R})$ one has $\delta_C(\pi)=0$.

\subsection{Morphisms to $R_0$}\label{subsection2.3}

Assume $X$ is a real curve of genus $g$ with
$X(\mathbb{R})=\emptyset$ and assume $\pi :X \rightarrow R_0$ is a
morphism of degree $k$ defined over $\mathbb{R}$. Consider the
associated morphism of Riemann surfaces $\pi
(\mathbb{C}):X(\mathbb{C}) \rightarrow
R_0(\mathbb{C})=\mathbb{P}^1(\mathbb{C})$ and let $|D|$ be the
associated complete linear system on $X(\mathbb{C})$. We are going
to show that $|D|$ is not induced by a linear system on $X$ (i.e.
$|D|$ does not contain a real divisor).

Notice that complex conjugation of divisors on $X(\mathbb{C})$ acts
on $|D|$. Indeed, choose a non-real point $Q+\overline{Q}$ on $R_0$
and let $\pi (\mathbb{C})^{-1}(Q)=D_1$. Since $\pi
(\mathbb{C})^{-1}(\overline{Q})=\overline{D_1}$ one has $D_1 \sim
\overline{D_1}$. For any $E\in |D|$ one has $E\sim D_1$. This
implies $\overline{E}\sim \overline{D_1}$ hence also $E\sim
\overline{E}$. Now $\pi (\mathbb{C})$ corresponds to a line $L$ in
$|D|$ that is invariant under this complex conjugation. If $|D|$
would be real it would imply $L$ is a real line, hence containing a
real point. Such real point corresponds to a real divisor and this
would be a fiber of $\pi(\mathbb{C})$ invariant under complex
conjugation but such fibers do not exist.

This implies $\mathcal{O}_X (D)$ corresponds to a point of $\Pic ^k
(X)(\mathbb{R})\setminus \Pic ^k(X)(\mathbb{R})^+$, hence $k \equiv
g+1 \pmod{2}$ (see \cite{ref2}). In this section we consider pencils
corresponding to points in $\Pic ^k (X)(\mathbb{R})\setminus \Pic
^k(X)(\mathbb{R})^+$ (in particular $k \equiv g+1 \pmod{2}$).

\begin{theorem}\label{theorem3}
Let $g$ and $k \equiv g+1 \pmod{2}$ be nonnegative integers. In case
$k \geq g+1$ and $X$ is a real curve of  genus $g$ with
$X(\mathbb{R})=\emptyset$ then there exists a morphism $\pi : X
\rightarrow R_0$ of degree $k$. In case $k<g+1$ then there exists a
real curve of genus $g$ with $X(\mathbb{R})=\emptyset$ having a
morphism $\pi : X \rightarrow R_0$ of degree $k$.
\end{theorem}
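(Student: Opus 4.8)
The theorem splits into two regimes according to whether $k \geq g+1$ or $k < g+1$, and I would treat them separately, mirroring the structure already used for Theorem \ref{theorem2}. The guiding principle, as announced in the introduction, is to reduce everything to a single existence statement over $\mathbb{C}$ together with the gluing/smoothing constructions of Section \ref{section1}; in particular Construction V is tailor-made for morphisms to $R_0$ and will carry the inductive step.

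For the case $k < g+1$ I would argue by induction on $g$, using Construction V to raise the genus by one while raising the degree by one. The base of the induction should be a curve of the smallest admissible genus for the given $k$: since $k \equiv g+1 \pmod 2$, the relevant starting point is a real curve $Y$ of genus $g_0 = k-1$ (so that $k = g_0 + 1$, the boundary case) with $Y(\mathbb{R}) = \emptyset$, equipped with a morphism $Y \to R_0$ of degree $k$. Applying Construction V a total of $g - g_0$ times produces a real curve of genus $g$ with empty real locus carrying a morphism of degree $k + (g-g_0)$ to $R_0$; the bookkeeping must be arranged so that the degree lands exactly on $k$, which forces me instead to start at genus $g_0$ with a morphism of degree $k - (g - g_0)$ and climb up, keeping the congruence $k \equiv g+1 \pmod 2$ intact at every stage (each application of Construction V shifts $g$ and $k$ by the same amount, so the parity relation is preserved automatically). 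The key point is that Construction V only requires a non-real point $P+\overline{P}$ on $Y$ over a non-branch point of $R_0$, which exists for degree reasons; this keeps the induction running.

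For the case $k \geq g+1$ the statement is stronger: the curve $X$ is \emph{given} in advance, so I cannot build it by smoothing and must instead produce the morphism directly on $X$. Here I would pass to $X(\mathbb{C})$ and invoke the complex theory. Since $k \geq g+1$, a general invertible sheaf $L$ of degree $k$ on $X_{\mathbb{C}}$ is base point free with $h^0(L) = k - g + 1 \geq 2$, so $|L|$ contains a base point free pencil. The task is to realize such a pencil \emph{over $\mathbb{R}$} in the twisted sense described just before the theorem, i.e. corresponding to a point of $\Pic^k(X)(\mathbb{R}) \setminus \Pic^k(X)(\mathbb{R})^+$, which is nonempty precisely because $k \equiv g+1 \pmod 2$ and $X(\mathbb{R}) = \emptyset$ (this is the content of \cite{ref2}*{Proposition 2.2}). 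Given an invariant $L$ with $L \cong \overline{L}$ that is not real, conjugation acts on the complex pencil $|L| \cong \mathbb{P}^1_{\mathbb{C}}$ as an antiholomorphic involution without fixed real divisor, hence as the real structure of $R_0$ rather than of $\mathbb{P}^1$; the associated quotient morphism is exactly the desired $\pi : X \to R_0$.

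The main obstacle I anticipate is the $k \geq g+1$ half, specifically verifying that a suitable invariant-but-not-real $L$ with a base point free pencil actually exists and that the induced antiholomorphic involution on the pencil is the fixed-point-free one (so that the target is $R_0$ and not $\mathbb{P}^1$). The dimension count giving base point freeness is routine over $\mathbb{C}$, but one must check it survives the descent to $\mathbb{R}$ — that is, that the generic $L$ in the nonempty component $\Pic^k(X)(\mathbb{R}) \setminus \Pic^k(X)(\mathbb{R})^+$ is base point free and yields a two-dimensional real space of sections in the twisted sense. This is where the careful analysis of the conjugation action on $|L|$, already rehearsed in the discussion preceding the theorem, does the essential work: the argument there shows any real divisor in the pencil would force an invariant fiber, which cannot occur, thereby pinning the real structure on the pencil to that of $R_0$.
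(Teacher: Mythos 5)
Your overall plan coincides with the paper's proof: for $k\geq g+1$ one descends a conjugation-invariant pencil with fixed-point-free involution to get $\pi:X\rightarrow R_0$, and for $k<g+1$ one climbs up with Construction V. However, there is a genuine gap in the inductive half: your induction never bottoms out. Your first proposed base (genus $g_0=k-1$ with a degree-$k$ morphism) you rightly discard for bookkeeping reasons, but the replacement --- ``start at genus $g_0$ with a morphism of degree $k-(g-g_0)$'' --- is just a smaller instance of the very statement being proved, so as written the argument is circular unless anchored at an independently known object. Note that you cannot anchor it at the $k\geq g+1$ half of the theorem: since each application of Construction V raises the degree by $1$, the hypothesis $k<g+1$ forces the starting degree $d_0=k-(g-g_0)$ to satisfy $d_0\leq g_0$, i.e.\ the base lies in the special range. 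The only available anchor is $d_0=2$, so $g_0=g-k+2$: one needs a real curve of genus $g-k+2$ without real points admitting a degree-$2$ morphism to $R_0$. This is exactly what the paper does, quoting \cite{ref2}*{Section 6} for the existence of such double covers; your congruence $k\equiv g+1 \pmod 2$ is precisely what makes $g-k+2$ odd, which is the condition for that existence result. Without naming this base and its justification, the case $k<g+1$ is incomplete.

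In the case $k\geq g+1$ there is a smaller slip: you write that conjugation acts on ``the complex pencil $|L|\cong \mathbb{P}^1_{\mathbb{C}}$'', but $\dim |L|=k-g$, so $|L|$ is itself a pencil only when $k=g+1$. For $k>g+1$ you must first select a conjugation-invariant pencil inside $|L|$; the paper does this by choosing an effective $D$ with $\mathcal{O}_X(D)\in \Pic^k(X)(\mathbb{R})\setminus \Pic^k(X)(\mathbb{R})^+$, picking $D_1\in |D|$, and taking the line through $D_1$ and $\overline{D_1}$, which is invariant by construction. This device also dissolves the obstacle you flag at the end: no genericity of $L$ within the real locus is needed, since one only needs the single invariant line (choosing $D_1$ so that $D_1$ and $\overline{D_1}$ have disjoint supports makes the pencil base point free of degree exactly $k$). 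The remainder of your reasoning --- a fixed point of the antiholomorphic involution on the pencil would be a real divisor in it, forcing $L$ to be a real invertible sheaf, contradicting $L\notin \Pic^k(X)(\mathbb{R})^+$, so the involution is fixed-point-free and the pencil descends to a morphism to $R_0$ --- is exactly the paper's argument.
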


\begin{proof}

First assume $k\geq g+1$. Choose an effective divisor $D$
 such that $\mathcal{O}_X (D)$ corresponds to a point on $\Pic ^k(X)(\mathbb{R})\setminus \Pic ^k(X)(\mathbb{R})^+$.
Then $|D|$ is not defined over $\mathbb{R}$ but complex conjugation
of divisors is defined on $|D|$. Choose $D_1\in |D|$ and let $L$ be
the line in $|D|$ connecting $D_1$ to $\overline{D_1}$. This
corresponds to a morphism $\pi (\mathbb{C}):X(\mathbb{C})\rightarrow
\mathbb{P}^1(\mathbb{C})$. Since $L=\overline{L}\subset |D|$ it
follows $\pi (\mathbb{C})$ is equivariant using the complex
conjugation on $X(\mathbb{C})$ and some antiholomorphic involution
on $\mathbb{P}^1(\mathbb{C})$ without fixed points. This implies
$\pi (\mathbb{C})$ is induced by a morphism $\pi : X\rightarrow R_0$
defined over $\mathbb{R}$.

Next assume $k\leq g+1$ and $k \equiv g+1 \pmod{2}$. Let $X_1$ be a
real curve of genus $g-k+2$ having a morphism $f_1:X_1\rightarrow
R_0$ of degree 2 (such curves exist since $g-k+2$ is odd, see
\cite{ref2}*{Section 6}). Then use $k-2$ times Construction V.

\end{proof}

\begin{remark}\label{remark4}
In his paper \cite{ref17} the author obtains Theorem \ref{theorem3}
by fixing a real divisor $D$ on $R_0$ and using the Riemann
Existence Theorem using coverings on $R_0(\mathbb{C})\setminus D$.
Although the author gives no argument for the existence of an
antiholomorphic involution on a suited choice of such a covering
this reasoning can be applied using Klein surfaces (see
\cite{AllGre} for the foundations of that theory). Also in that
paper he obtains Theorem \ref{theorem2} for the case $s=0$ but in
that argument details are omitted.

In his paper \cite{ref18} the author also gives an argument for the
existence of real curves $X$ of given topological type having a
morphism $\pi :X \rightarrow \mathbb{P}^1$ of degree $k$. For the
case $a(X)=0$ he uses deformations of hyperelliptic and trigonal
curves. The case $a(X)=1$ is contained in \cite{ref23}. In that
paper the author uses deformations of singular real curves on suited
real rational surfaces using Brusotti-type arguments. We use
arguments directly applied to coverings which is more natural and
easier. In both cases it is not clear to us how to obtain the
existence of all admissible topological degrees for $\pi$ using
those deformations. We also give some remarks on some of the
arguments used in those papers (especially \cite{ref18}) in Remark
\ref{remark9}.

In \cite{ref2}*{Section 8} one considers real trigonal curves and
one asks about determining the topological types of such curves. In
Theorem \ref{theorem2} we obtain that all admissable topological
types can be realized as real trigonal curves. A (in principle
similar) proof is described in \cite{ref18}*{Proposition 2.1}. A
different proof using Fuchsian and NEC (non-euclidean
crystallographic) groups (and obtaining much more detailled
information concerning the case of cyclic trigonal coverings) is
given in \cite{ref22}.
\end{remark}

\section{Some Brill-Noether Theory for real pencils on real curves}

On $\Pic^k(X)(\mathbb{C})$ we consider the closed subspace
$W^1_k(X)$ representing invertible sheafs on $X_{\mathbb{C}}$
satisfying $h^0(L)\geq 2$. This subspace $W^1_k(X)$ is invariant
under conjugation, hence it is defined over $\mathbb{R}$. We write
$W^1_k(X)(\mathbb{R})$ to denote its set of real points and
$W^1_k(X)(\mathbb{R})^+=W^1_k(X)(\mathbb{R})\cap
\Pic^k(X)(\mathbb{R})^+$. In case $k\geq g+1$ one has $W^1_k(X)=\Pic
^k(X)(\mathbb{C})$, so we assume from now on that $k\leq g$.

The study of those spaces $W^1_k(X)$ for general complex curves $X$
of genus $g$ is the Brill-Noether Theory for pencils. The main
results are the following. First consider the Brill-Noether number
$\rho ^1_k(g)=2k-g-2$. For a general complex curve $X$ one has
$W^1_k(X)=\emptyset$ in case $\rho ^1_k(g)<0$ and $\dim
(W^1_k(X))=\rho ^1_k(g)$ in case $\rho ^1_k(g)\geq 0$. Also in case
$\rho ^1_k(g)\geq 0$ then $W^1_k(X)\neq \emptyset$ for all complex
curves $X$. Here ''general'' means: the statement holds for all
curves $X$ on a dense open subset of the moduli space $M_g$ of
curves of genus $g$.

In \cite{ref3} one considers Brill-Noether theory of
$W^1_k(X)(\mathbb{R})$ for real curves $X$. There are important
differences with the complex case. We consider the moduli space
$M_{g/\mathbb{R}}$ of real curves of genus $g$ representing the
isomorphism classes over $\mathbb{R}$ of real curves of genus $g$.
We recall some facts on it (for details see \cite{ref1}). It is a
semi-analytic real variety of dimension $3g-3$ (as a matter of fact
it has a non-empty boundary, see \cite{ref12} for its description).
For each topological type $(g,s,a)$ there is a unique connected
component $M_{g/\mathbb{R}}(g,s,a)$ of $M_{g/\mathbb{R}}$. This
connected component is the quotient of a Teichmuller space (which is
a connected real analytic manifold) by means of a discontinuous
action of a modular group. Inside such component
$M_{g/\mathbb{R}}(g,s,a)$, curves having a certain type of real
linear system give rise to semi-analytic subvarieties of
$M_{g/\mathbb{R}}(g,s,a)$. Therefore there is in general no
description of behavior on a dense open subset of
$M_{g/\mathbb{R}}(g,s,a)$ and we need to modify the meaning of
''general''.

\begin{definition}\label{definition5}
Let $P$ be some property concerning real linear systems on real
curves. We say there is \emph{a general real curve} of topological
type $(g,s,a)$ satisfying property P if there exists a non-empty
open subset $U$ of $M_{g/\mathbb{R}}(g,s,a)$ such that P holds for
all curves corresponding to a point of $U$.
\end{definition}

\begin{example}\label{exampleN}
To illustrate the difference with the complex situation we consider
$W^1_3$ for curves of genus 4 ($\rho ^1_3(4)=0\geq 0$) (see
\cite{ref2}*{Section 8}). A real curve of type (4,0,1) has no real
$g^1_3$ since all its real divisors have even degree. In case $s\neq
0$ there exists a general real curve of type (4,s,a) having a real
$g^1_3$. In case $(s,a)\neq (1,0)$ there also exists a general real
curve of type $(4,s,a)$ having no real $g^1_3$. In Proposition
\ref{proposition10}, as an application of Theorem \ref{theorem2}, we
give a description of the situation in case $(s,a)=(1,0)$.
\end{example}

In \cite{ref3} it is proved that in case $\rho ^1_k(g)\geq 0$ and
$s\neq 0$ then there is a general real curve of type $(g,s,a)$ such
that $W^1_k(X)(\mathbb{R})\neq \emptyset$. In order to prove this
result the author constructs a real compactification of the real
Hurwitz space of coverings of degree $k$. The restriction $s\neq 0$
has a technical cause. Working with families of real curves the
author needs the existence of a section defined over $\mathbb{R}$.
In particular such a section guaranties that the relative Picard
scheme of such families represents the Picard functor (see
\cite{ref11b}).

Now we are going to use the existence results from Section
\ref{section2} to give another proof. In that way we obtain a finer
statement involving the covering degree. In the argument we make
intensive use of Brill-Noether theory for complex curves and we work
with the space of morphisms to $\mathbb{P}^1$ instead of the Hurwitz
space. This space of morphisms is defined using Hilbert schemes in
\cite{ref11a}*{Section 4.c}. In particular we do not need relative
Picard schemes to finish our arguments, hence we do not need the
restriction $s\neq 0$. In case $s=0$ we consider both
$W^1_k(X)(\mathbb{R})^+$ and $W^1_k(X)(\mathbb{R})\setminus
W^1_k(X)(\mathbb{R})^+$.

In case $f:X \rightarrow R_0$ is a morphism of degree $k$ then
$f_{\mathbb{C}} : X_{\mathbb{C}} \rightarrow
\mathbb{P}^1_{\mathbb{C}}$ corresponds to a linear system $g^1_k
\subset |L|$ for some $L\in W^1_k(X)(\mathbb{R})\setminus
W^1_k(X)(\mathbb{R})^+$. In order to study
$W^1_k(X)(\mathbb{R})\setminus W^1_k(X)(\mathbb{R})^+$ using
morphims, we also need the converse statement.

\begin{lemma}\label{lemma6}
Let $L\in W^1_k(X)(\mathbb{R})\setminus W^1_k(X)(\mathbb{R})^+$ and
assume $|L|$ is base point free of dimension 1. Then there is a
morphism $f:X \rightarrow R_0$ of degree $k$ such that
$f_{\mathbb{C}}$ corresponds to $|L|$.
\end{lemma}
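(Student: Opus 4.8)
The plan is to build the morphism directly from the base-point-free pencil $|L|$, producing the real structure of $R_0$ on the target out of complex conjugation. Since $L$ is invariant under conjugation, the assignment $D \mapsto \overline{D}$ maps $|L|$ to itself: for $D \in |L|$ one has $\mathcal{O}_X(\overline{D}) \cong \overline{L} \cong L$, so $\overline{D} \in |L|$. On sections this is the antilinear map $s \mapsto \overline{s}$ (using an isomorphism $H^0(\overline{L}) \cong H^0(L)$), hence it induces an antiholomorphic involution $\iota$ on $|L| \cong \mathbb{P}^1(\mathbb{C})$.

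First I would check that $\iota$ is fixed-point-free. A fixed point of $\iota$ is a divisor $D \in |L|$ with $\overline{D}=D$, that is, a real divisor in $|L|$; its existence would give $L \cong \mathcal{O}_X(D)$ for a real divisor $D$, so $L \in \Pic^k(X)(\mathbb{R})^+$ and therefore $L \in W^1_k(X)(\mathbb{R})^+$, against the hypothesis. So $\iota$ has no fixed point. A fixed-point-free antiholomorphic involution on $\mathbb{P}^1(\mathbb{C})$ is, up to isomorphism, the antipodal one, so the pair $(|L|,\iota)$ is exactly $\mathbb{P}^1(\mathbb{C})$ equipped with the real structure of $R_0$; this identifies the target of the pencil with $R_0(\mathbb{C})$.

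Next I would form the morphism $\phi = f_{\mathbb{C}} : X(\mathbb{C}) \to |L|$ attached to the base-point-free pencil, sending $P$ to the unique member of $|L|$ passing through $P$ (equivalently $[s_0(P):s_1(P)]$ for a basis $s_0,s_1$ of $H^0(L)$). Then $\phi^*\mathcal{O}(1) \cong L$, so $\phi$ is non-constant of degree $\deg L = k$. The key verification is equivariance: if $\phi(P)=D$ then $P$ lies in the support of $D$, hence $\overline{P}$ lies in the support of $\overline{D}$, which gives $\phi(\sigma(P)) = \overline{D} = \iota(\phi(P))$; thus $\phi \circ \sigma = \iota \circ \phi$. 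Since $\phi$ intertwines complex conjugation on $X(\mathbb{C})$ with the real structure $\iota$ on the target $R_0(\mathbb{C})$, the same graph argument used at the start of Section \ref{section1} shows that $\phi$ descends to a morphism $f : X \to R_0$ defined over $\mathbb{R}$, of degree $k$, with $f_{\mathbb{C}}$ corresponding to $|L|$.

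The routine points are the translation between ``$|L|$ contains no real divisor'' and $L \notin \Pic^k(X)(\mathbb{R})^+$, and the standard descent of an equivariant morphism to one defined over $\mathbb{R}$. The main thing to get right is the equivariance $\phi \circ \sigma = \iota \circ \phi$ together with the identification of the target $(\mathbb{P}^1(\mathbb{C}),\iota)$ with $R_0$; once the fixed-point-freeness of $\iota$ has been established, this identification is forced by the classification of real forms of $\mathbb{P}^1$, so no further work is needed.
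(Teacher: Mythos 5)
Your proof is correct. Note that the paper itself gives no in-text argument for Lemma \ref{lemma6}: its ``proof'' is a citation to \cite{ref4}*{Example 1}, so you have supplied the details rather than diverged from them. Your argument is exactly the technique the paper does spell out in the first half of the proof of Theorem \ref{theorem3} (there applied to a line joining $D_1$ and $\overline{D_1}$ inside a larger system, whereas here the pencil is all of $|L|$): conjugation of divisors acts on $|L|$ because $L\cong\overline{L}$; absence of a real divisor in $|L|$ --- equivalent to $L\notin\Pic^k(X)(\mathbb{R})^+$ --- makes the induced antiholomorphic involution $\iota$ fixed-point-free; the classification of real structures on $\mathbb{P}^1_{\mathbb{C}}$ identifies $(|L|,\iota)$ with $R_0$; and the graph argument from the start of Section \ref{section1} descends the equivariant map to a morphism $f:X\to R_0$ over $\mathbb{R}$ of degree $k=\deg L$. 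One small merit of your write-up worth keeping: by checking involutivity and fixed points at the level of divisors ($D\mapsto\overline{D}$) rather than sections, you sidestep the issue that the antilinear lift $s\mapsto\alpha(\overline{s})$ of $\iota$ to $H^0(L)$ only squares to a scalar (whose sign distinguishes the real from the quaternionic case), which would otherwise require a short normalization argument.
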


\begin{proof}
See \cite{ref4}*{Example 1}.
\end{proof}

Let $X$ be a real curve of topological type $(g,s,a)$ and let
$g^1_k$ be a complete base point free linear system on $X$. We say
$g^1_k$ has topological degree $\underline{\delta}$ if the morphisms
$f:X \rightarrow \mathbb{P}^1$ associated to $g^1_k$ have
topological degree $\underline{\delta}$.

Let ${ }^{\circ}{W^1_k}(X)$ be the subspace of $W^1_k(X)$
parameterizing complete base point free linear systems $g^1_k$ on
$X_{\mathbb{C}}$. It is the complement of $W^1_{k-1}(X)+W^0_1(X)$ in
$W^1_k(X)$, hence it is Zariski-open in $W^1_k(X)$. In particular ${
}^{\circ}{W^1_k}(X)(\mathbb{R})$ is Zariski-open in
$W^1_k(X)(\mathbb{R})$. Because the topological degree is a discrete
invariant it is constant on connected components of ${
}^{\circ}{W^1_k}(X)(\mathbb{R})$. Let ${
}^{\circ}{W^1_k}(X)(\underline{\delta})$ be the sublocus of ${
}^{\circ}{W^1_k}(X)(\mathbb{R})$ such that it parameterizes linear
systems $g^1_k$ of topological degree $\underline{\delta}$ and let
$W^1_k(X)(\underline{\delta})$ be its closure in
$W^1_k(X)(\mathbb{R})$ (it is a union of irreducible components of
$W^1_k(X)(\mathbb{R})$).

\begin{theorem}\label{theorem7}
Let $\underline{\delta}$ be an admissible topological degree of base
point free linear systems $g^1_k$ on real curves of topological type
$(g,s,a)$ such that $\sum_{i=1}^s \delta _i \leq k-2$ in case $a=1$.

\begin{enumerate}

\item If $\rho ^1_k(g)<0$ then there is no general real curve $X$ of
type $(g,s,a)$ such that $W^1_k(X)(\underline{\delta})$ is not
empty.

\item If $\rho ^1_k(g)\geq 0$ then there is a general real curve $X$
of topological type $(g,s,a)$ such that
$W^1_k(X)(\underline{\delta})$ is non-empty and it is a real
algebraic subset of $\Pic ^k(X)(\mathbb{R})$ of dimension $\rho
^1_k(g)$.

\item In case $s=0$ and $k \equiv g+1 \pmod {2}$ then in case $\rho
^1_k(g)<0$ there is no general real curve $X$ of topological type
$(g,0,1)$ such that $W^1_k(X)(\mathbb{R})\setminus
W^1_k(X)(\mathbb{R})^+$ is not empty. In case $\rho ^1_k(g)\geq 0$
then there is a general real curve $X$ of topological type $(g,0,1)$
such that $W^1_k(X)(\mathbb{R}) \setminus W^1_k(X)(\mathbb{R})^+$ is
not empty and has dimension $\rho ^1_k(g)$.

\end{enumerate}

\end{theorem}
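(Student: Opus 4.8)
The plan is to deduce Theorem~\ref{theorem7} from the existence results of Section~\ref{section2} combined with standard Brill--Noether theory for complex curves and a dimension-count argument on the space of morphisms to $\mathbb{P}^1$. The key philosophy, announced in the introduction, is that it suffices to produce \emph{one} real curve of the given topological type carrying a base point free $g^1_k$ of topological degree $\underline{\delta}$ (for part (2)) or a suitable pencil corresponding to a point of $\Pic^k(X)(\mathbb{R})\setminus\Pic^k(X)(\mathbb{R})^+$ (for part (3)); genericity then follows by an openness argument on the relevant parameter space. I would organize the proof around the Hilbert-scheme model of the space of morphisms $X\to\mathbb{P}^1$ introduced in \cite{ref11a}*{Section 4.c}, which is globally smooth by Horikawa deformation theory, and which comes equipped with a real structure.

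First I would dispose of part (1) and the negative half of part (3). If $\rho^1_k(g)<0$, then for a general \emph{complex} curve $X$ one has $W^1_k(X_{\mathbb{C}})=\emptyset$. The locus of complex curves with $W^1_k\ne\emptyset$ is a proper closed subvariety of $M_g$, so its preimage in $M_{g/\mathbb{R}}(g,s,a)$ under the forgetful map has empty interior; hence no non-empty open subset $U$ can satisfy the property, and there is no general real curve with $W^1_k(X)(\underline{\delta})\neq\emptyset$ (respectively with $W^1_k(X)(\mathbb{R})\setminus W^1_k(X)(\mathbb{R})^+\ne\emptyset$). This is the routine direction.

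For the positive direction (parts (2) and (3) with $\rho^1_k(g)\ge 0$) I would proceed in three steps. \textbf{Step one:} invoke Theorem~\ref{theorem2} (for part (2)) or Theorem~\ref{theorem3} together with Lemma~\ref{lemma6} (for part (3)) to produce a single real curve $X_0$ of topological type $(g,s,a)$ carrying a base point free real pencil of degree $k$ with the prescribed topological degree $\underline{\delta}$, respectively carrying a pencil corresponding to a point of $\Pic^k(X_0)(\mathbb{R})\setminus\Pic^k(X_0)(\mathbb{R})^+$. \textbf{Step two:} use the global smoothness of the space of morphisms to compute dimensions. Over $\mathbb{C}$, the Brill--Noether theory gives $\dim W^1_k(X_{\mathbb{C}})=\rho^1_k(g)$ for the general complex curve, and the smoothness of the universal space of morphisms lets me conclude that $W^1_k$ has the expected dimension in a neighborhood of $[X_0]$ in the real moduli space; crucially, because topological degree is a discrete invariant constant on connected components of ${}^{\circ}W^1_k(X)(\mathbb{R})$, the sublocus of the prescribed topological degree survives under small real deformation. \textbf{Step three:} translate this into the existence of the required non-empty open set $U\subset M_{g/\mathbb{R}}(g,s,a)$, so that $W^1_k(X)(\underline{\delta})$ is non-empty of dimension $\rho^1_k(g)$ for all $[X]\in U$.

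\textbf{The main obstacle} I anticipate is Step two: transferring the complex dimension count $\dim W^1_k=\rho^1_k(g)$ to a statement about the \emph{real} locus $W^1_k(X)(\mathbb{R})$ and, more delicately, to the specific topological-degree stratum $W^1_k(X)(\underline{\delta})$. The point is that a real algebraic set can have dimension strictly less than its complexification, so one must verify that the real points of $W^1_k$ near the pencil produced in Step one genuinely form a set of dimension $\rho^1_k(g)$ rather than degenerating. This is precisely where the global smoothness coming from Horikawa deformation theory is essential: smoothness of the ambient space of morphisms guarantees that the real and complex tangent spaces have matching dimensions, so the real Zariski tangent space to $W^1_k(X)(\mathbb{R})$ at our pencil has dimension $\rho^1_k(g)$, forcing the real dimension to be exactly $\rho^1_k(g)$ and ruling out collapse. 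The handling of part (3) carries the additional subtlety that, since $X(\mathbb{R})=\emptyset$, the relevant pencils live in $\Pic^k(X)(\mathbb{R})\setminus\Pic^k(X)(\mathbb{R})^+$ and must be accessed through morphisms to $R_0$ rather than to $\mathbb{P}^1$, using Lemma~\ref{lemma6} to pass between the two viewpoints.
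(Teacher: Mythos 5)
Your plan follows the paper's actual proof in its main lines: one explicit curve from Theorem \ref{theorem2} (resp.\ Theorem \ref{theorem3} plus Lemma \ref{lemma6}), a suited family $\pi:\mathcal{C}\rightarrow S$ with the Hilbert-scheme space of morphisms $H_k(\pi)$ (resp.\ $H^R_k(\pi)$), Horikawa smoothness making $H_k(\pi)(\mathbb{R})$ a real manifold of pure dimension $2k+2g-2$, constancy of $\underline{\delta}$ on connected components, and the finite-fiber maps to moduli coming from bijectivity of the Kodaira--Spencer map. Your treatment of the negative statements via the complex gonality locus is a mild variant of the paper's bound $\dim\left(\pi_k(H_k(\pi)(\mathbb{C}))\right)\leq 3g-3+\rho^1_k(g)$ and is acceptable, since there only the inequality $\dim V(\mathbb{R})\leq \dim V_{\mathbb{C}}$ is needed. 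The genuine gap is in your Step two. You claim that smoothness forces the real Zariski tangent space of $W^1_k(X)(\mathbb{R})$ at the pencil produced in Step one to have dimension $\rho^1_k(g)$, hence the expected dimension ``in a neighborhood of $[X_0]$''. But Horikawa's theorem gives smoothness of $H_k(\pi)$, not of $W^1_k(X)$: the curve $X_0$ produced by the degenerations of Section \ref{section2} may well be special, and its pencil may be a singular point of $W^1_k(X_0)$, lie in a component of excess dimension, or fail to be complete; none of this is excluded by smoothness of the morphism space. Accordingly, the paper never asserts genericity near $[X_0]$; it passes to a general point of the image $\pi_k\left(H_k(\pi)(\mathbb{R})\right)$.

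What is missing concretely is the paper's control of two exceptional loci inside $H_k(\pi)$ (and their analogues $H'^R_k(\pi)$, $H^{2,R}_k(\pi)$ for part (3)): the locus $H'_k(\pi)$, the union of the fibers of $\pi_k$ of dimension greater than $\rho^1_k(g)+3$, and the locus $H^2_k(\pi)$ of morphisms whose associated $g^1_k$ is not complete. Proving that each has complex dimension less than $2k+2g-2$ requires two complex inputs your sketch does not mention: that every irreducible component of $H_k(\pi)(\mathbb{C})$ dominates $S(\mathbb{C})$ (the argument of \cite{ref14}*{Appendix}), and the Brill--Noether dimension statements $\dim W^r_k=\rho^r_k(g)$ for the general fiber, the case $r\geq 2$ handling $H^2_k(\pi)$. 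Both loci are conjugation-invariant, hence defined over $\mathbb{R}$, so their real loci have dimension less than $2k+2g-2$; since $H_k(\pi)(\mathbb{R})$ is a manifold of pure dimension $2k+2g-2$, one may deform $[f]$ within its connected component (preserving $\underline{\delta}$, as you correctly note) off both loci. Only after this do the two bounds follow: avoiding $H'_k(\pi)$ yields $\dim\left(\pi_k(H_k(\pi)(\mathbb{R}))\right)=3g-3$ and caps fiber dimensions at $\rho^1_k(g)+3$, while avoiding $H^2_k(\pi)$ guarantees completeness, so that each invertible sheaf accounts for exactly a $3$-dimensional family of morphisms (the $Aut(\mathbb{P}^1_{\mathbb{C}})$-action) and the fiber count translates into $\dim\left(W^1_k(X')(\underline{\delta})\right)=\rho^1_k(g)$ for a general curve $X'$ in the image. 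Your tangent-space assertion cannot substitute for this counting; without it neither the non-collapse of the real locus at the general nearby curve nor the exact value $\rho^1_k(g)$ is established.
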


In order to prove this theorem we are going to use spaces
parameterizing morphisms to $\mathbb{P}^1$. First we introduce some
definition that will be useful for the proof.

\begin{definition}\label{definition8}
A \emph{suited family of curves of genus $g$} is a projective
mophism $\pi :\mathcal{C}\rightarrow S$ such that
\begin{itemize}
\item $S$ is smooth, irreducible and quasi-projective,
\item each fiber of $\pi$ is a smooth connected curve of genus $g$,
\item for each $s\in S$ the Kodaira-Spencer map $T_s(S)\rightarrow
H^1(\pi ^{-1}(s), T_{\pi^{-1}(s)})$ is bijective.
\end{itemize}
In case $X$ is a given smooth curve of genus $g$ then we say $\pi
:\mathcal{C}\rightarrow S$ is a suited family for $X$ if moreover
some fiber of $\pi$ is isomorphic to $X$. If moreover $X$ is defined
over $\mathbb{R}$ then we also assume $\pi :\mathcal{C}\rightarrow
S$ is defined over $\mathbb{R}$ and $X$ is isomorphic over
$\mathbb{R}$ to some fiber of $\pi$ over a real point of $S$.
\end{definition}

Such suited families do exist (see \cite{ref4}*{Section 3}, in that
paper one considers real curves with real points and one needs the
family to have a real section, in our situation one does not need to
use $S'$ from loc. cit.). From a suited family we obtain a morphism
$S\rightarrow M_g$ and in case $K= \mathbb{R}$ we also obtain a
morphism $S(\mathbb{R})\rightarrow M_{g/\mathbb{R}}$. From the
bijectivity of the Kodaira-Spencer map it follows those morpisms to
the moduli space have finite fibers. Therefore it is enough to prove
the statements of Theorem \ref{theorem7} for real curves
corresponding to fibers of $\pi$ at general points of
$S(\mathbb{R})$.

Let $\pi : \mathcal{C}\rightarrow S$ be a suited family of curves of
genus $g$ defined over $K$ (being $\mathbb{C}$ or $\mathbb{R}$). For
a noetherian $S$-scheme $T$ we consider the set
$\mathcal{H}_k(\pi)(T)$ of finite $T$-morphisms $\mathcal{C}\times
_S T\rightarrow \mathbb{P}^1\times T$ of degree $k$. This functor is
representable by an $S$-scheme $\pi _k : H_k (\pi) \rightarrow S$
(see \cite{ref11a}*{Section 4.c}). Let $f: X \rightarrow
\mathbb{P}^1_{\mathbb{C}}$ be a morphism corresponding to a point
$[f]$ on $H_k(\pi)(\mathbb{C})$ and consider the associated exact
sequence

\[
0 \rightarrow T_X \rightarrow f^*(T_{\mathbb{P}^1}) \rightarrow N_f
\rightarrow 0
\]
From Horikawa's deformation theory of holomorphic maps (see
\cite{ref19}, see also \cite{ref13}*{3.4.2}) it follows
$T_{[f]}(H_k(\pi))$ is canonically identified with $H^0(X,N_f)$ and
since $H^1(X,N_f)=0$ it follows $H_k(\pi)$ is smooth of dimension
$2k+2g-2$. In case $K= \mathbb{R}$ it follows $H_k(\pi)(\mathbb{R})$
is a smooth real manifold of dimension $2k+2g-2$ (of course $H_k(\pi
)(\mathbb{R})$ need not be connected).

In case $K=\mathbb{R}$ then for a noetherian $S$-scheme $T$ we
consider the set $\mathcal{H}^R_k(\pi)(T)$ of finite $T$-morphisms
$\mathcal{C} \times _S T \rightarrow R_0 \times T$ of degree $k$.
This functor is represented by an $S$-scheme $\pi ^R_k :H^R_k(\pi)
\rightarrow S$ and using the same arguments we obtain $H^R_k(\pi)$
is smooth of dimension $2k+2g-2$ and $H^R_k(\pi)(\mathbb{R})$ is a
smooth real manifold of dimension $2k+2g-2$ (again, it need not be
connected).

Because $\dim (Aut(\mathbb{P}^1_{\mathbb{C}}))=3$ the non-empty
fibers of $\pi_k(\mathbb{C})$ (and $\pi ^R_k(\mathbb{C})$) do have
dimension at least 3. Hence the image of $\pi _k(\mathbb{C})$ (and
of $\pi ^R_k(\mathbb{C})$) has dimension at most $2k+2g-5=3g-3+\rho
^1_k(g)$. Hence in case $\rho ^1_k(g)<0$ then a general point of
$S(\mathbb{C})$ does not belong to the image of $\pi _k(\mathbb{C})$
(or $\pi ^R_k (\mathbb{C})$). Now assume $\rho ^1_k(g)\geq 0$. From
the arguments in \cite{ref14}*{Appendix} it follows that each
component of $H_k(\pi)(\mathbb{C})$ (or $H^R_k(\pi)(\mathbb{C})$)
dominates $S(\mathbb{C})$. Let $H'_k(\pi)(\mathbb{C})$ (resp.
$H'^R_k(\pi)(\mathbb{C})$) be the closed subspace of
$H_k(\pi)(\mathbb{C})$ (resp. $H^R_k(\pi)(\mathbb{C})$) consisting
of the fibers of $\pi_k(\mathbb{C})$ (resp. $\pi^R_k(\mathbb{C})$)
having dimension greater than $\rho ^1_k(g) +3$. Assume it contains
an irreducible component of $H_k(\pi )(\mathbb{C})$. Since such
component dominates $S(\mathbb{C})$ it would follow that it has
dimension more than $3g-3+\rho ^1_k(g)+3=2g+2k-2$, a contradiction.
Hence it follows that $H'_k(\pi )(\mathbb{C})$ (resp. $H'^R_k(\pi
)(\mathbb{C})$) does not contain irreducible components of
$H_k(\pi)(\mathbb{C})$ (resp. $H^R_k(\pi)(\mathbb{C})$), hence its
dimension is less than $2g+2k-2$. Moreover, those subspaces are
invariant under complex conjugation, hence they are defined over
$\mathbb{R}$. So we obtain closed subschemes $H'_k(\pi)$ and
$H'^R_k(\pi)$ of $H_k(\pi )$ defined over $\mathbb{R}$ and we obtain
$\dim \left( H'_k(\pi)(\mathbb{R}) \right) <2g+2k-2$ (resp. $\dim
\left( H'^R_k(\pi)(\mathbb{R}) \right) <2g+2k-2$). For a complex
curve $X_s$ corresponding to a general point $s$ on $S(\mathbb{C})$
it follows from Brill-Noether Theory that $W^r_k(X)$ has dimension
$\rho ^r_k(g)=g-(r+1)(g-k+r)$ (if $\rho ^r_k(g)\geq 0$, otherwise it
is empty). In case $r\geq 2$ this gives rise to a subspace of
$\pi^{-1}_k(s)$ (resp. $\pi '^{-1}_k (s)$) of dimension $\rho
^r_k(g)+2(r-1)+3<\rho ^1_k(g)+3$. Let $H^2_k (\pi)(\mathbb{C})$
(resp. $H^{2,R}_k(\pi)(\mathbb{C})$) be the closed subspace of
$H_k(\pi)(\mathbb{C})$ (resp. $H^R_k(\pi)(\mathbb{C})$)
corresponding to linear systems $g^1_k$ that are not complete. Then
we obtain that those are not components of $H_k(\pi)(\mathbb{C})$
(resp. $H^R_k(\pi)(\mathbb{C})$) because they have dimension less
than $2g+2k-2$. Again $H^2_k(\pi)$ and $H^{2,R}_k(\pi)$ are defined
over $\mathbb{R}$ and we find $\dim \left( H^2_k(\pi)(\mathbb{R})
\right)<2g+2k-2$ (resp. $\dim \left( H^{2,R}_k(\pi)(\mathbb{R})
\right) <2g+2k-2$).

\begin{proof}[Proof of Theorem \ref{theorem7}]

In case $\rho ^1_k(g)<0$ then from $\dim \left( H_k(\pi)(\mathbb{R})
\right) <3g-3$ part 1 follows. Also in case $s=0$ one has $\dim
\left( H^R_k(\pi )(\mathbb{R})\right) <3g-3$, hence using Lemma
\ref{lemma6} the first statement of part 3 follows. So assume $\rho
^1_k(g)\geq 0$ .

By Theorem \ref{theorem2} there is a real curve $X$ of topological
type $(g,s,a)$ having a morphism $f:X \rightarrow \mathbb{P}^1$ of
degree $k$ having topological degree $\underline{\delta}$. Let $\pi
: \mathcal{C} \rightarrow S$ be a suited family for $X$ then $f$
defines $[f]\in H_k(\pi)(\mathbb{R})$. From the previous
consideration it follows we can assume $[f]\notin
H'_k(\pi)(\mathbb{R})\cup H^2_k(\pi)(\mathbb{R})$. From $[f]\notin
H'_k(\pi)(\mathbb{R})$ it follows $\dim \left( \pi_k (H_k(\pi
)(\mathbb{R})) \right) =3g-3$ and from $[f]\notin
H^2_k(\pi)(\mathbb{R})$ it follows $f$ corresponds to a complete
$g^1_k$. Hence a general element of $\pi _k(H_k(\pi )(\mathbb{R}))$
corresponds to a real curve $X'$ of type $(g,s,a)$ with $\dim \left(
W^1_k(X')(\underline{\delta}) \right) \geq \rho ^1_k(g)$. Also $\dim
\left( H_k(\pi)(\mathbb{R}) \right)=2g+2k-2$ implies we obtain $\dim
\left( W^1_k (X')(\underline{\delta}) \right)=\rho ^1_k(g)$, proving
part 2.

In case $s=0$ and $k \equiv g+1 \pmod {2}$, by Theorem
\ref{theorem3} there exists a real curve $X$ of topological type
$(g,0,1)$ having a real morphism $f:X\rightarrow R_0$ of degree $k$.
Now we find $[f]\in H^R_k(\pi )(\mathbb{R})$ and we can assume
$[f]\notin H'^R_k(\pi )(\mathbb{R})\cup H^{2,R}_k(\pi
)(\mathbb{R})$. As in the previous case we obtain the second
statement of part 3 (again we also use Lemma \ref{lemma6}).

\end{proof}

\begin{remark}\label{remark9}
The proof of Theorem \ref{theorem7} is intensively based on known
dimension statements in the complex case. In order to be able to use
those statements it is very important to know that $\dim \left(
H_k(\pi ) \right)=2k+2g-2$ implies $\dim \left( H_k(\pi
)(\mathbb{R}) \right)=2k+2g-2$. To make this conclusion it is very
important to know that $H_k(\pi )(\mathbb{C})$ is smooth. In his
paper \cite{ref18} mentioned in Remark \ref{remark4} the author uses
similar arguments to obtain statements as those in Theorem
\ref{theorem7} but without considering the topological degree.
However the arguments are applied on the moduli space itself.
Although the author claims the locus of $k$-gonal curves on the
module space is smooth this is not clear (and in general it is not
true). Moreover for his arguments he needs that the closure of that
locus is smooth at the hyperelliptic or trigonal locus. To solve
this technical problem one should use a suited family containing a
hyperelliptic or trigonal curve and use the space parameterizing
linear systems $g^1_k$ on fibers of that family. Indeed, according
to \cite{ref5} that space is smooth. The construction of that space
uses the relative Picard scheme and we did not use that space in our
arguments in order to be able to use arguments also applicable in
case $s=0$. Also in his paper \cite{ref23}*{End of proof of Theorem
0.1} the author uses similar arguments but again not all details are
described completely. In particular the author uses an argument on
Hurwitz schemes but it is not clear from it that one obtains real
coverings with only simple ramification (in particular, that one
obtains a point in the smooth locus of the compactified Hurwitz
scheme). Those arguments in \cite{ref23} can be replaced by using
the Hilbert scheme parameterizing morphisms and the details
described in this section.

In his paper \cite{ref18} the author also shows that (with the same
remarks on the proof as above), in case $\rho ^1_k(g)<0$, a general
$k$-gonal real curve of topological type $(g,s,a)$ has a unique
linear system $g^1_k$. This fact can be obtained from our arguments
(and fixing the topological degree of the linear system). As in
\cite{ref18} one uses that a general complex $k$-gonal curve has a
unique $g^1_k$ (see \cite{ref7}). Since a general complex $k$-gonal
curve has no multiple $g^1_k$ (see \cite{ref6}) one also obtains a
similar result for general real $k$-gonal curves fixing the
topological degrees.
\end{remark}

We return to Example \ref{exampleN}, finishing the case of curves of
topological type (4,1,0). A non-hyperelliptic complex curve $X$ of
genus 4 is trigonal. By Riemann-Roch, in case $g$ is a $g^1_3$ on
$X$ then $|K_X-g|=h$ is also a $g^1_3$ on $X$ and it is well known
that $X$ has no other linear systems $g^1_3$. Hence $X$ has either
two (in case $g\neq h$) or one (in case $g=h$) linear systems
$g^1_3$.

\begin{proposition}\label{proposition10}
Let $X$ be a non-hyperelliptic real curve of topological type
$(4,1,0)$. Then $X$ has two real linear systems $g^1_3$. One of them
has topological degree $(3)$, the other one has topological degree
$(1)$.
\end{proposition}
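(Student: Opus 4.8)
The plan is to combine the description of the $g^1_3$'s on a genus $4$ curve recalled just above with the geometry of the canonical quadric and with the relation between topological degree and the invariant $a$ established in Section \ref{section2}. Working first over $\mathbb{C}$: as recalled, $X_{\mathbb{C}}$ is trigonal and carries exactly two linear systems $g^1_3$, namely $g$ and $h=|K_X-g|$ (they coincide only when the canonical quadric is a cone). Since $K_X$ is defined over $\mathbb{R}$, complex conjugation $\sigma$ either fixes both $g$ and $h$ or interchanges them. Fixing both is equivalent to each being a real linear system $g^1_3$; interchanging them means $\overline{g}=h$, so that neither defines a morphism to $\mathbb{P}^1$ over $\mathbb{R}$ and $X$ has no real $g^1_3$ at all. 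Thus everything reduces to two points: that $\sigma$ fixes $g$ and $h$ individually, and that the two resulting real pencils have topological degrees $(3)$ and $(1)$.

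For reality I would pass to the canonical model $X_{\mathbb{C}}\subset \mathbb{P}^3_{\mathbb{C}}$, which lies on a unique quadric $Q$; since the canonical system is real, $Q$ is defined over $\mathbb{R}$, and its two rulings cut out $g$ and $h$. Because $s=1$ we have $X(\mathbb{R})\neq\emptyset$, hence $Q(\mathbb{R})\neq\emptyset$, so $Q$ is a smooth real quadric either of signature $(2,2)$ (real locus a torus, with two real rulings) or of signature $(3,1)$ (real locus a sphere, with a conjugate pair of rulings). In the first case the rulings, hence $g$ and $h$, are real; in the second case $\sigma$ interchanges the rulings, hence interchanges $g$ and $h$. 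The key step is therefore to show that the hypothesis $a(X)=0$ forces signature $(2,2)$: I would argue that in the sphere case the interchange of the two rulings glues the two would-be halves of $X(\mathbb{C})$ along the non-real locus lying over $\mathbb{P}^1(\mathbb{R})$, so that $X(\mathbb{C})\setminus X(\mathbb{R})$ is connected and $a(X)=1$, contradicting $a(X)=0$. (The same analysis excludes the cone, for which there would be only one $g^1_3$.) I expect this reality step --- ruling out the conjugate-pair configuration from the single hypothesis $a=0$, uniformly for every curve of the given type --- to be the main obstacle.

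Granting signature $(2,2)$, both $g$ and $h$ are real, and it remains to compute their topological degrees. Writing $Q(\mathbb{R})\cong S^1\times S^1$, the connected real locus $X(\mathbb{R})=C_1$ is an embedded circle of some primitive class $(p,q)$, and the topological degree of the pencil cut by a ruling equals the intersection number of $C_1$ with the corresponding ruling circle, namely $|p|$ and $|q|$. Since each pencil has degree $3$, a real fibre has an odd number of real points, so $p$ and $q$ are odd; together with $|p|,|q|\le 3$ and $\gcd(p,q)=1$ this leaves only the classes $(1,1)$ and $(1,3)$ up to symmetry. Here I would invoke the principle from the start of Section \ref{section2}, that a real morphism of degree $k$ with $\sum_i\delta_i=k$ forces $a=0$, together with the complementary check that the class $(1,1)$ (both topological degrees equal to $1<3$) produces non-real points over $\mathbb{P}^1(\mathbb{R})$ connecting the two halves and hence gives $a=1$. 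Consequently $a(X)=0$ selects the class $(1,3)$, so the two real linear systems $g^1_3$ have topological degrees $(3)$ and $(1)$, as claimed.

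Finally, Theorem \ref{theorem2} enters as the existence anchor: applied with $k=3$, topological type $(4,1,0)$ and $\underline{\delta}=(3)$, it produces a real curve of this type carrying a real $g^1_3$ of topological degree $(3)$, whose Serre-dual partner $|K_X-g|$ is then automatically real; this both confirms that the signature-$(2,2)$ picture is realized and, via the connectedness of $M_{4/\mathbb{R}}(4,1,0)$ together with the local constancy of the topological degree on the smooth morphism spaces $H_3(\pi)$ introduced above, can be used to propagate the conclusion to every non-hyperelliptic curve of type $(4,1,0)$, should the direct quadric argument for reality prove delicate.
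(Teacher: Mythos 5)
Your quadric reduction is a genuinely different route from the paper's, and its skeleton is sound: the canonical model of $X_{\mathbb{C}}$ lies on a unique quadric $Q$ defined over $\mathbb{R}$, and the trichotomy (two real rulings / conjugate rulings on a sphere / cone) correctly enumerates the possibilities for the pair $\{g,h\}$. But the proof has genuine gaps at exactly the points you flag as delicate, and the reasoning you offer there is not merely incomplete but in one place demonstrably invalid. Your claim that in the torus case the class $(1,1)$ ``produces non-real points over $\mathbb{P}^1(\mathbb{R})$ connecting the two halves and hence gives $a=1$'' does not hold up: non-real points in fibers over real base points lie inside the two halves of $X(\mathbb{C})\setminus X(\mathbb{R})$ and create no path between them, and indeed Cases 3--5 of Section \ref{subsection2.2} of this very paper construct coverings with $a=0$ and $\sum_i\delta_i<k$ (even with all $\delta_i=0$). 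The same style of hand-waving underlies your sphere-case exclusion (there is no real morphism to $\mathbb{P}^1$ in that case, so ``the non-real locus lying over $\mathbb{P}^1(\mathbb{R})$'' does not parse). What would actually rescue both steps is Gabard's theorem \cite{ref20} --- every separating non-hyperelliptic curve of type $(4,1,0)$ carries a $g^1_3$ of topological degree $(3)$ --- which the paper uses in the form $T(3)=T\setminus H$ and which you never invoke: it kills the sphere case (no real $g^1_3$ at all) and the class $(1,1)$ (no $\delta=3$ ruling) in one stroke.

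The deeper gap is the cone, which you dismiss in a parenthesis. Even granting Gabard, a curve on a real quadric cone would carry a \emph{unique} (half-canonical) real $g^1_3$ of topological degree $(3)$, and nothing in your argument forbids this configuration; yet the proposition asserts it cannot occur. Excluding it is precisely the hard content of the paper's proof: there one shows $T(1)\neq\emptyset$ (via Theorem \ref{theorem2} with $\underline{\delta}=(1)$, not $(3)$ as in your last paragraph), that $T(1)$ is closed in the connected space $T\setminus H$, and then uses Horikawa's criterion --- $d_{[f]}\pi_3$ is surjective if and only if $\dim H^0(L_f^{\otimes 2})=3$, failing exactly when $L_f$ is half-canonical --- to show a boundary point of $T(1)$ would be a curve whose only $g^1_3$ is half-canonical of degree $(1)$, contradicting $T(3)=T\setminus H$. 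Your fallback ``propagation via connectedness of $M_{4/\mathbb{R}}(4,1,0)$ and local constancy of the topological degree on $H_3(\pi)$'' founders on the same rock: local constancy on the smooth space $H_3(\pi)(\mathbb{R})$ does not make the image of $H_3(1)(\mathbb{R})$ in moduli open, and openness fails exactly at the half-canonical pencils you would need to exclude; so the fallback presupposes the very step it is meant to replace.
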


\begin{proof}
Let $T$ be a real Teichm\"uller space of real curves of topological
type (4,1,0) and let $\pi _T:\mathcal{X}\rightarrow T$ be the
associated universal family. This space $T$ is a connected real
manifold of dimension 9 and all real curves of topological type
(4,1,0) do occur as fibers of that family. Inside $T$ the closed
subspace $H$ of points having hyperelliptic fibers for $\pi _H$ has
codimension 2, hence $T \setminus H$ is still connected. Inside $T
\setminus H$ we consider $T(1)$ (resp. $T(3)$) being the subspace of
points such that the fiber $X$ for $\pi _H$ has a morphism $f:X
\rightarrow \mathbb{P}^1$ of degree 3 of topological degree (1)
(resp. (3)). From \cite{ref20} we know $T(3)=T \setminus H$ and we
need to prove $T(1)=T \setminus H$ too. Because of Theorem
\ref{theorem2} we know $T(1)\neq \emptyset$, we assume that
$T(1)\neq T\setminus H$.

The fiber of a point belonging to the closure of $T(1)$ in
$T\setminus H$ is a real curve $X$ that is the limit of real curves
$X_t$ having a morphism $f_t:X_t \rightarrow \mathbb{P}^1$ of
topological degree (1). This corresponds to a linear system
$g^1_3(t)$ on $X_t$ and the limit is a linear system $g^1_3$ on $X$.
Since $X$ is not hyperelliptic this linear system has no base points
and it is complete, hence it corresponds to a morphism $f:X
\rightarrow \mathbb{P}^1$ being a limit of those morphisms $f_t$,
hence $f$ has topological degree (1) too. This implies $T(1)$ is
closed in $T \setminus H$ hence it is not open in $T \setminus H$
since $T \setminus H$ is connected. Therefore $T(1)$ has a boundary
point $t_0$ in $T \setminus H$, let $X_0$ be the fiber of $\pi _T$
above $t_0$. Now let $\pi : \mathcal{C} \rightarrow S$ be a  suited
morphism of curves for $X_0$ (let $s_0\in S(\mathbb{R})$ such that
$\pi ^{-1}(s_0)=X_0$).

Let $\pi _3 : H_3 \rightarrow S$ be the parameterspace for all
coverings of degree 3 to $\mathbb{P}^1$ of fibers of $\pi$ above
points on $S$. Let $H_3(1)$ (resp. $H_3(3)$) be the closed open
subsets of coverings of topological degree $(1)$ (resp. $(3)$). For
a covering $f:X\rightarrow \mathbb{P}^1$ we write $L_f$ to denote
the corresponding invertible sheaf. For $[f]\in H_3$ it follows from
the deformation theory of Horikawa that $d_{[f]}(\pi _3)$ is
surjective if and only if $\dim \left(H^0(L_f^{\otimes 2})
\right)=3$ (indeed $H^0(X,N_f)\rightarrow H^1(X,T_X)$ is the tangent
map of $\pi _3$ at $[f]$). In that case for $x=\pi _3([f])$ there is
a neighborhood $U$ of $x$ in $S$ such that $U\subset \im (\pi _3)$.
In case $\dim \left( H^0(L_f^{\otimes 2}) \right)>3$ then $L_f
^{\otimes 2} \cong \omega _{\pi ^{-1}(x)}$, hence $L_f$ is
half-canonical.

Let $f_0 : X_0 \rightarrow \mathbb{P}^1$ be the morphism of degree 3
on $X_0$ of topological degree (1). For each classical neighborhood
$U$ of $s_0$ in $S$ there exists $s\in U\cap S(\mathbb{R})$ such
that $\pi ^{-1}(s)=\pi _T^{-1}(t)$ for some $t\notin T(1)\cup H$,
hence $U \nsubseteq \pi _3(H_3(1))$. From the previous description
of the tangent map of $\pi _3$ it follows $L_{f_0}$ is
half-canonical on $X_0$. In particular $X_0$ has no $g^1_3$
associated to an invertible sheaf different from $L_{f_0}$ and this
would imply there is no morphism $f:X_0 \rightarrow \mathbb{P}^1$ of
topological degree (3), hence $t_0\notin T(3)$ contradicting $T(3)=T
\setminus H$.

\end{proof}

\begin{remark}\label{remark11}
In contrast with our Theorem \ref{theorem7} it is already mentioned
that for each topological type $(4,s,a)\neq (4,1,0)$ there is a
general real curve $X$ having no real $g^1_3$. In \cite{ref3} it is
also proved that for each topological type $(8,s,a)\neq \{ (8,1,0),
(8,0,1) \}$ there is a general real curve $X$ having no real $g^1_5$
(while Theorem \ref{theorem7} implies there is a general real curve
$X$ having a real $g^1_5$). On the other hand, the main result of
\cite{ref20} implies that there is no general real curve $X$ of
topological type $(8,1,0)$ having no base point free $g^1_5$ of
topological degree $(5)$. For real curves without real points it is
proved in \cite{ref21} that for each genus $g$ there exist general
real curves $X$ of topological type $(g,0,1)$ such that
$X^1_k(X)(\mathbb{R})\setminus W^1_k(X)(\mathbb{R})^+$ is empty for
each $k\leq g$. Our Theorem \ref{theorem7} implies that there exist
general real curves of topological type $(g,g-1,0)$ having a base
point free $g^1_{g-1}$ of topological degree $(1, \cdots, 1)$. From
forthcoming work of the first author it follows that there also
exist general real curves $X$ of topological type $(g,g-1,0)$ having
no such linear system $g^1_{g-1}$ and many similar statements.
\end{remark}

\section{Real 4-gonal curves having a $g^1_4$ with no totally
non-real divisor and no component of non-zero degree}

\begin{definition}\label{defintion12}
Let $X$ be a real curve and let $f:X\rightarrow \mathbb{P}^1$ be a
morphism defined over $\mathbb{R}$. We define the \emph{covering
number} $k(f)$ as follows. Consider the associated map
$f(\mathbb{R}):X(\mathbb{R})\rightarrow \mathbb{P}^1(\mathbb{R})$.
If $f(\mathbb{R})$ is not surjective then $k(f)=0$. Otherwise $k(f)$
is the minimal number $k$ such that there exist connected components
$C_1, \cdots, C_k$ of $X(\mathbb{R})$ such that $f(C_1 \cup \cdots
\cup C_k)=\mathbb{P}^1(\mathbb{R})$.
\end{definition}

Of course, in case $X(\mathbb{R})=\emptyset$ then $k(f)=0$ by
definition. Also $k(f)=1$ if and only if there is a connected
component $C$ of $X(\mathbb{R})$ such that $\delta _C(f)\geq 1$. In
particular $k(f)=1$ in case $\deg(f)$ is odd. Also in case
$\deg(f)=2$ then $k(f)$ is equal to 0 or to 1. So the case with
$\deg (f)=4$ is the first interesting case to study the possible
values for $k(f)$. In the next theorem we prove there are no further
restrictions on $k(f)$ for gonality 4.

\begin{theorem}
Let $(g,s,a)$ be an admissable topological type for real curves with
$s\geq 1$. Let $1\leq k\leq s$. There exists a real curve $X$ of
topological type $(g,s,a)$ such that there is a covering
$f:X\rightarrow \mathbb{P}^1$ of degree 4 defined over $\mathbb{R}$
such that $k(f)=k$ and for each connected component $C$ of
$X(\mathbb{R})$ one has $\delta _C(f)=0$.
\end{theorem}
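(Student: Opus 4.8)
The plan is to realise the prescribed covering number by controlling the images $f(C_i)\subset\mathbb{P}^1(\mathbb{R})$ of the real components, and then to reduce the general case to a base case by the node-smoothing constructions of Section~\ref{section1}. The first fact I would record is a rigidity forced by the degree. If every component satisfies $\delta_{C_i}(f)=0$, then over a generic real point each component carrying a preimage there carries an \emph{even}, hence at least two, real preimages; since $\deg(f)=4$ this forces every point of $\mathbb{P}^1(\mathbb{R})$ to lie in the image of \emph{at most two} of the arcs $f(C_i)$. So the target must be a system of arcs covering the circle with multiplicity at most $2$, and $k(f)$ is precisely the minimal number of these arcs whose union is $\mathbb{P}^1(\mathbb{R})$. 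With this in mind I aim for the following picture: for $k\geq 2$, exactly $k$ \emph{essential} components whose images are arcs arranged cyclically around $\mathbb{P}^1(\mathbb{R})$, each overlapping only its two neighbours, together covering the circle, and each carrying a \emph{private} point $p_i$ lying in no other image; for $k=1$, a single component mapping \emph{onto} $\mathbb{P}^1(\mathbb{R})$ with $\delta=0$ (a $2$-to-$1$ fold of winding number $0$). The private points (resp. the single surjective component) force $k(f)\geq k$, while the chosen $k$ arcs cover the circle, giving $k(f)=k$. Every remaining component will map to a small arc lying \emph{inside} one essential arc, hence is redundant and cannot lower the covering number.

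The reduction step then proceeds as follows. Put $g_0=g-s+k$; one checks that $(g_0,k,a)$ is again an admissible topological type (for $a=1$ this is $s\leq g$, for $a=0$ it is $s\equiv g+1\pmod 2$ together with $s\leq g+1$, and in both cases $g_0\geq 0$). Starting from a base curve $X_0$ of type $(g_0,k,a)$ carrying a degree~$4$ covering of covering number $k$ with all $\delta_{C_i}=0$, I apply Construction~\ref{subsection1.2} with real ramification ($t>0$) exactly $s-k$ times, each time at a conjugate pair of points lying over a real point in the interior of an already singly covered essential arc (such a pair exists, since over such a point the two real preimages leave room for exactly one conjugate pair in a degree~$4$ fibre). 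Each application leaves the degree equal to $4$, preserves the invariant $a$, raises both $g$ and $s$ by one, and introduces a new component of topological degree $0$ whose image is a small subarc of an essential arc. After $s-k$ steps we reach type $(g,s,a)$ with all topological degrees $0$ and covering number still $k$. The parity constraints match automatically because Construction~\ref{subsection1.2} preserves $s-g$, so no separate genus adjustment is required.

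It remains to produce the base cases. For $k=1$ I would take a real hyperelliptic curve of type $(g_0,1,a)$ and apply Construction~\ref{subsection1.1} twice: attaching a copy of $\mathbb{P}^1$ makes the component surject onto $\mathbb{P}^1(\mathbb{R})$, and choosing the two deformations suitably (one without and one with real ramification in the case $a=1$, where the hyperelliptic component starts with $\delta=0$; two deformations with real ramification in the case $a=0$, where it starts with $\delta=2$) returns the topological degree to $0$ while keeping the degree equal to $4$ and leaving the genus and the number of components unchanged. This yields a single surjective component of topological degree $0$, so $k(f_0)=1$.

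The hard part, and the main obstacle, is the base case $k\geq 2$: I must build a genuine degree~$4$ covering whose real locus consists of exactly $k$ arcs forming a cyclic cover of $\mathbb{P}^1(\mathbb{R})$ with multiplicity at most $2$ and with $k$ private points, all of topological degree $0$, on a curve of the prescribed type $(g_0,k,a)$. The degree bound of at most two arcs through each point is exactly what rules out the naive ``large overlapping arcs'' construction, and it is the source of the difficulty. I expect to obtain the cyclic pattern by starting from a real hyperelliptic curve whose $k$ real ovals map $2$-to-$1$, with topological degree $0$, onto $k$ \emph{disjoint} arcs, and then using the constructions of Section~\ref{section1} to enlarge and link consecutive arcs into overlapping neighbours while promoting the degree to $4$, verifying at each stage that the multiplicity stays at most $2$ and that the resulting arcs retain their private points. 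Checking the resulting lower bound $k(f_0)\geq k$ — that no $k-1$ of the arcs can cover $\mathbb{P}^1(\mathbb{R})$ — together with the exact matching of the topological type $(g_0,k,a)$ and the behaviour of the invariant $a$ under each construction, is where the delicate bookkeeping will lie.
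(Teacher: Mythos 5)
Your preparatory material is largely sound: the multiplicity-$\leq 2$ observation is correct (and implicit in the paper's figures); your padding reduction --- applying Construction II with real ramification $s-k$ times at a conjugate pair over a point with exactly two real preimages, each step raising $g$ and $s$ by one, preserving $a$ and the degree, and adding a redundant component whose image is a small subarc of an essential arc --- is a legitimate mild variant of the paper's bookkeeping (the paper instead splits one oval into several by smoothing nodes on it with real ramification, merges $b+1$ ovals into one by a chain of nodes when $s<g+1$, and passes to $a=1$ at the end via Construction II without real ramification); and your $k=1$ base case via a hyperelliptic curve and two applications of Construction I works. But there is a genuine gap exactly where you flag it: the base case $k\geq 2$ is never constructed, and that is the heart of the theorem --- the paper devotes most of its proof and both key figures to it. Worse, the route you sketch ("start from one hyperelliptic curve whose $k$ ovals map to $k$ disjoint arcs, then enlarge and link consecutive arcs while promoting the degree to 4") cannot be implemented with the tools of Section 1: Construction I attaches an entire $\mathbb{P}^1(\mathbb{R})$, so it makes the image of the modified component the whole circle and shifts its topological degree by $\pm 1$, and each application raises the covering degree by $1$, leaving you only two applications between degree $2$ and degree $4$; Construction II only creates a new component over a tiny interval; Construction III creates a component with $\delta=1$. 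None of these operations lengthens the image arc of an existing oval while keeping $\delta=0$, so the cyclic pattern of $k$ overlapping arcs, each with a private point, is unreachable from a single hyperelliptic curve this way.

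The missing idea in the paper is to \emph{superpose two double coverings}. Take two real hyperelliptic $M$-curves $Y_1$, $Y_2$ (of genera $g/2$ and $g/2$ for $g$ even, resp.\ $(g-1)/2$ and $(g+1)/2$ for $g$ odd) whose real branch points interlace on $\mathbb{P}^1(\mathbb{R})$ so that the image arcs $I^1_j$ and $I^2_j$ of the ovals overlap in a cyclic chain, each arc meeting only its two neighbours. The disjoint union $Y_1\sqcup Y_2$ then automatically carries a degree-$4$ morphism in which every oval has $\delta=0$ and every real point is covered by at most two arcs --- your multiplicity bound is satisfied by construction rather than verified step by step. Gluing a single node $p_1=p_2$ over a point of $I^1_0\cap I^2_0$ and smoothing over $\mathbb{R}$ (locally $x^2-y^2=t$, as in Construction I) merges the two glued ovals into one component with image $I^1_0\cup I^2_0$ and yields an irreducible $M$-curve of genus $g$ with $k(f)=g+1$; taking this base at genus $k-1$ gives the cyclic configuration of exactly $k$ essential ovals from which the general case follows. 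Your reduction machinery would survive intact on top of this base, but until the interlaced two-hyperelliptic construction (or an equivalent device producing overlapping arcs at degree $4$ with all $\delta=0$) is supplied, the proposal does not prove the theorem for any $k\geq 2$.
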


\begin{proof}
We first prove the case of $M$-curves ($s=g+1$, in particular $a=0$)
with $k=g+1$.

In case $g$ is even take two hyperelliptic $M$-curves $Y_1$ and
$Y_2$ of genus $g'=g/2$ having double coverings $f_i:Y_i\rightarrow
\mathbb{P}^1$ defined over $\mathbb{R}$ with the following
properties. Let $C^{i}_0, \cdots, C^{i}_g$ be the connected
components of $Y_i(\mathbb{R})$ and let $I^{i}_j=f_i(C^{i}_j)$ then
$I^1_{j_1}$ intersects $I^2_{j_2}$ if and only if one of the
following holds:

\begin{equation*}
\begin{cases}
j_1=j_2 & \text { with } 0\leq j_1 \leq g'\\
j_1=j_2+1 & \text { with } 1\leq j_1\leq g'\\
j_1=0 \text{ and } j_2=g'
\end{cases}
\end{equation*}
and the non-empty intersections are connected (see Figure
\ref{Figure 9} with $g'=2$).

\begin{figure}[h]
\begin{center}
\includegraphics[height=6 cm]{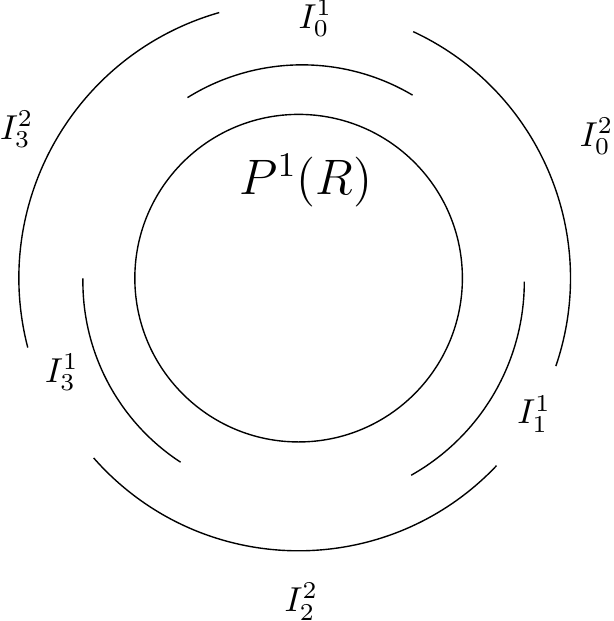}
\caption{$s=k=g+1$ and $g$ even} \label{Figure 9}
\end{center}
\end{figure}

\noindent Let $t\in I^1_0\cap I^2_0$ and take $p_i\in f^{-1}_i(t)$.
Let $X_0=\left( Y_0 \cup Y_1 \right) _{p_1=p_2}$. It is a stable
real curve of genus $g$ having a node $p=(p_1=p_2)$ and a morphism
$f:X_0\rightarrow \mathbb{P}^1$ of degree 4 defined over
$\mathbb{R}$. Locally at $p$ the curve $X_0$ is defined over
$\mathbb{R}$ by the equation $x^2-y^2=0$ and the morphism is given
by $(x,y) \mapsto x$. Using a local deformation over $\mathbb{R}$
given by $x^2-y^2=t$ and gluing with other local coordinates one
obtains a curve $X_t$ defined over $\mathbb{R}$ having a covering
$f_t:X_t\rightarrow \mathbb{P}^1$ of degree 4 defined over
$\mathbb{R}$ such that $g(X_t)=2g'=g$. (The details are as in
Construction 1.) The components of $X_t(\mathbb{R})$ are
deformations $C^{i}_j(t)$ of $C^{i}_j$ for $1\leq j\leq g'$ and
$i=1; 2$ and of $\left( C^1_0 \cup C^2_0 \right)_{p_1=p_2}$ which is
one component $C_0(t)$. Clearly $f_t\left( C_0(t) \right)$ is a
deformation of $I^1_0 \cup I^2_0$ and $f_t\left( C^{i}_j(t) \right)$
is a deformation of $I^{i}_j$ for $i=1; 2$, $1\leq j\leq g'$. It
follows that their union is equal to $\mathbb{P}^1(\mathbb{R})$ but
omitting one of them does not cover $\mathbb{P}^1(\mathbb{R})$. It
follows $s(X_t)=2g'+1=g+1$ (hence $X_t$ is an $M$-curve) and
$k=g+1$.

In case $g$ is odd take two hyperelliptic $M$-curves $Y_1$ of genus
$g'=(g-1)/2$ and $Y_2$ of genus $g'+1=(g+1)/2$. Use the notation
$C^{i}_j$ for the components of $Y_i(\mathbb{R})$ with $0\leq j\leq
g'$ in case $i=1$ and $0\leq j\leq g'+1$ in case $i=2$. Let
$I^{i}_j=f_i\left( C^{i}_j \right)$ and assume $I^1_{j_1}$
intersects $I^2_{j_2}$ for $0\leq j_1; j_2\leq g'$ as in the
previous case and $I^2_{g'+1}\subset I^1_0$ (see Figure \ref{Figure
10} with $g'=2$).

\begin{figure}[h]
\begin{center}
\includegraphics[height=6 cm]{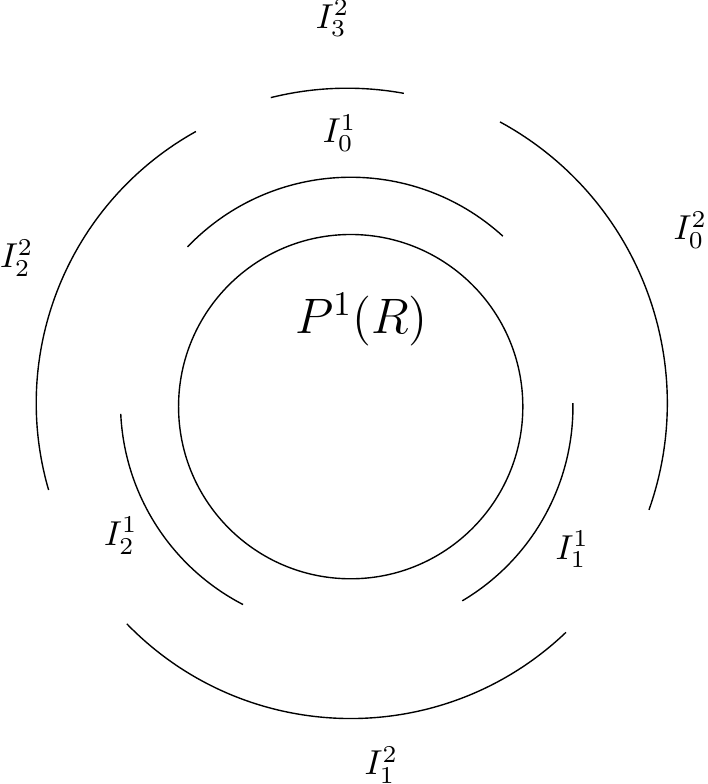}
\caption{$s=k=g+1$ and $g$ odd} \label{Figure 10}
\end{center}
\end{figure}

\noindent Let $t\in I^2_{g'+1}$ and take $p_i\in f^{-1}_i(t)$. Let
$X_0$ be as before then arguing as before one obtains an $M$-curve
$X_t$ of genus $g$ with a covering $f_t:X_t\rightarrow \mathbb{P}^1$
of degree 4 defined over $\mathbb{R}$ such that the components of
$X_t(\mathbb{R})$ are deformations $C^{i}_j(t)$ for $0\leq j\leq g'$
and $i=1; 2$ (with $j\neq 0$ for $i=1$) and $C(t)$ of $\left(
C^1_0\cup C^2_{g'+1}\right) _{p_1=p_2}$. The images $f_t \left(
C^{i}_j(t) \right)$ are deformations of $I^{i}_j$ and the image
$f_t(C(t))$ is a deformation of $I^1_0$. It follows $k=g+1$.

Now we are going to prove the theorem for $M$-curves in case
$k<g+1$. From the previous part of the proof we obtain the existence
of an $M$-curve $Y$ of genus $k-1$ having a covering
$f_Y:Y\rightarrow \mathbb{P}^1$ of degree 4 defined over
$\mathbb{R}$ satisfying the following property. Let $C_1, \cdots,
C_k$ be the components of $Y(\mathbb{R})$, let $I_i=f_Y(C_i)$ then
$I_i$ intersects $I_j$ for $i\neq j$ if and only if

\begin{equation*}
\begin{cases}
j=i+1 & \text { for } 1\leq i\leq k-1\\
j=k \text { and } i=1
\end{cases}
\end{equation*}
and the non-empty intersections are connected. In case $k=1$ then
$Y(\mathbb{R})$ has a unique component $C_1$ dominating
$\mathbb{P}^1(\mathbb{R})$ such that $\delta _{f_Y}(C_1)=0$ and
there is a connected closed subset $I\subset
\mathbb{P}^1(\mathbb{R})$ such that $x \in I$ if and only if
$f_y^{-1}(x) \subset Y(\mathbb{R})$.

Let $c_1, \cdots, c_{g-k+1}$ be different points on $I_1\cap I_2$ in
this order (with $c_1$ most close to $I_1\setminus I_2$) and let
$f_Y^{-1}(x_i)\cap C_1= \{ p_{i1}, p_{i2} \}$ (see Figure
\ref{Figure 11} with $k=4$ and $g=6$). In case $k=1$ those are
points in the inner part of $I$. Let $X_0=Y_{p_{i1}=p_{i2} \text{
for } 1\leq i\leq g-k+1}$. Then $X_0$ is defined over $\mathbb{R}$
and it has a covering $f_0:X_0\rightarrow \mathbb{P}^1$ defined over
$\mathbb{R}$ of degree 4. Locally at the node $p_i=\left(
p_{i1}+p_{i2} \right)$ the curve $X_0$ is defined over $\mathbb{R}$
by $x^2-y^2=0$ and the morphism by $(x,y)\mapsto x$. Using local
deformations over $\mathbb{R}$ by the equation $x^2-y^2=t$ with
$t\geq 0$ one obtains two new real ramification points close to
$c_i$. Gluing one obtains a curve $X_t$ defined over $\mathbb{R}$
having a morphism $f_t:X_t\rightarrow \mathbb{P}^1$ of degree 4
defined over $\mathbb{R}$ with $g(X_t)=g$. (This is similar to
applying $g-k+1$ times Construction I with real ramification.) For
each $2\leq i\leq k$ there is a component $C_i(t)$ of
$X_t(\mathbb{R})$ that is a deformation of $C_i$. Because of the
chosen local deformations, the deformation of $C_1$ is a union of
$g-k+2$ components $C_1(t), C'_1(t), \cdots, C'_{g-k+1}(t)$. It
follows $s(X_t)=g+1$ hence $X_t$ is an $M$-curve. The images
$f_t(C_i(t))$ are deformations of $I_i$ for $2\leq i\leq t$, the
image $f_t(C_1(t))$ is a deformation of the connected component of
$I_1\setminus \{ c_1 \}$ containing $I_1\setminus I_2$, the images
of $f_t(C'_i(t))$ are deformations of the interval between $c_i$ and
$c_{i+1}$ on $I_1$ for $1\leq i\leq g-k$ and a deformation of the
connected component of $I_1\setminus \{ c_{g-k+1} \}$ not containing
$I_1 \setminus I_2$ for $i=g-k+1$.

\begin{figure}[h]
\begin{center}
\includegraphics[height=6 cm]{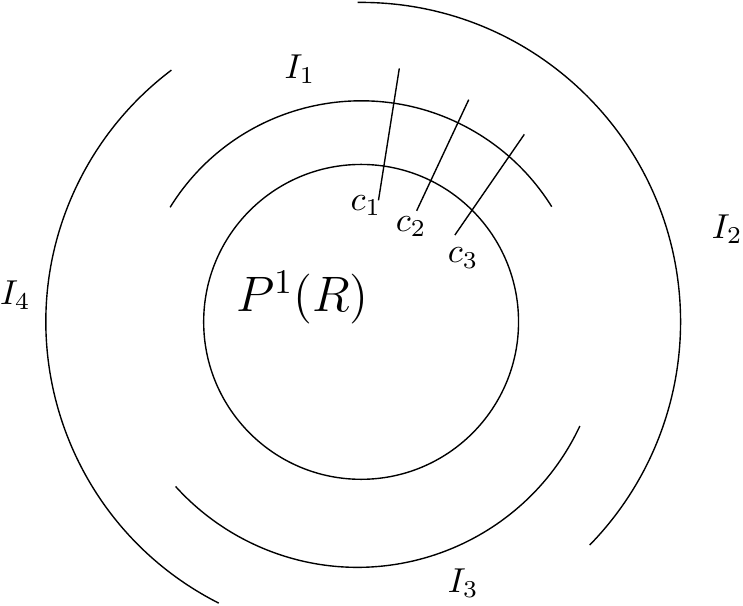}
\caption{$k<s=g+1$} \label{Figure 11}
\end{center}
\end{figure}

\noindent If follows $C_1(t), \cdots, C_k(t)$ is the only subset of
$k$ components $C$ of $X_t(\mathbb{R})$ such that the union of the
intervals $f_t(C)$ equals $\mathbb{P}^1(\mathbb{R})$. This implies
$k(f_t)=k$.

Now we finish the proof for the orientable case $a=0$ in case $X$ is
not an $M$-curve (in particular $s\leq g-1$). Let $b=(g+1-s)/2$,
which is an integer. From the prevous part of the proof we obtain
the existence of an $M$-curve $Y$ of genus $g-b=s+b-1$ (hence
$Y(\mathbb{R})$ has $s+b$ connected components) having a covering
$f_Y:Y\rightarrow \mathbb{P}^1$ of degree 4 defined over
$\mathbb{R}$ satisfying the following property. Let $C_1, \cdots,
C_{s+b}$ be the components of $Y(\mathbb{R})$ and let
$I_i=f_Y(C_i)$. Then $I_j$ and $I_{j'}$ with $1\leq j; j'\leq k+b$
do have a point in common if and only if

\begin{equation*}
\begin{cases}
j'=j+1 & \text { for } 1\leq j\leq k+b-1\\
j=k+b \text { and } j'=1
\end{cases}
\end{equation*}

\noindent and the non-empty intersections are connected and
$I_{k+b+1}, \cdots, I_{s+b}$ is contained in $I_1 \setminus \left(
I_2 \cup I_{k+b} \right)$ (see Figure \ref{Figure 12} with $k+b=4$
and $s+b=6$).

\begin{figure}[h]
\begin{center}
\includegraphics[height=6 cm]{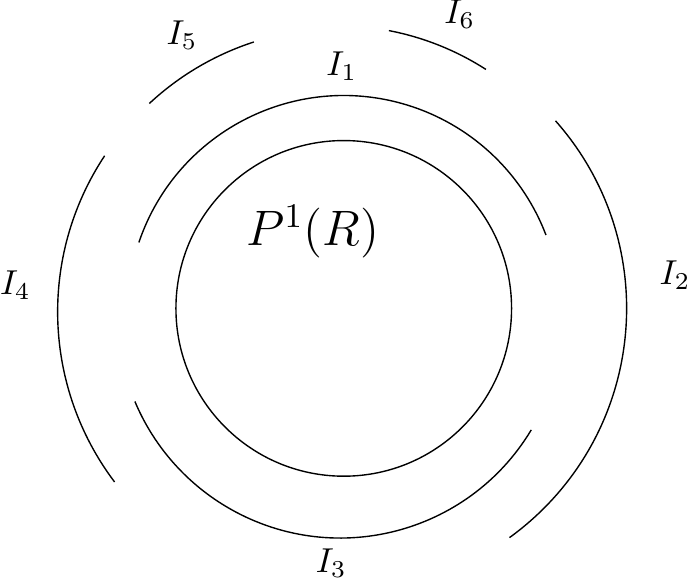}
\caption{$a=0$; $s<g+1$} \label{Figure 12}
\end{center}
\end{figure}

\noindent For $1\leq j\leq b$ let $p_j \in C_j$ and $q_j \in
C_{j+1}$ with $f_Y(p_j)=f_Y(q_j)\in I_j\cap I_{j+1}$. Let
$X_0=Y_{p_j=q_j \text { for } 1\leq j\leq b}$. Using locally real
deformations at the nodes $p_j=q_j$ one obtains a real curve $X_t$
of genus $g(Y)+b=g$ and a covering $f_t=X_t\rightarrow \mathbb{P}^1$
of degree 4 defined over $\mathbb{R}$. The components of
$X_t(\mathbb{R})$ are a deformation $C(t)$ of $\left( C_1 \cup
\cdots \cup C_{b+1} \right) _{p_j=q_j \text { for } 1\leq j\leq b}$
and deformations $C_i(t)$ of $C_i$ for $b+2\leq i\leq s+b$. In
particular $s(X_t)=s$. The image $f(C(t))$ is a deformation of $I_1
\cup \cdots \cup I_{b+1}$ and the image $f(C_i(t))$ for $b+2\leq
i\leq s+b$ are deformations of $I_i$. It follows that $k(f_t)=k$.

Finally we consider the non-orientable case (hence $a=1$). First
assume $s \equiv g \pmod { 2 }$. Let $Y$ be an orientable curve of
genus $g-1$ with $s(Y)=s$ such that $Y$ has a covering
$f_Y:Y\rightarrow \mathbb{P}^1$ of degree 4 defined over
$\mathbb{R}$ with $k(f_Y)=k$. Let $c\in \mathbb{P}^1(\mathbb{R})$
such that $f_Y^{-1}(c)$ is not a totally real divisor and let
$P+\overline{P}\subset f_Y^{-1}(c)$ for a non-real pair
$P+\overline{P}$. Let $X_0=Y_{P=\overline{P}}$, it is a real curve
of genus $g+1$ having a covering of degree 4 defined over
$\mathbb{R}$. Locally at the node $P=\overline{P}$ the curve is
defined over $\mathbb{R}$ by $x^2+y^2=0$ and the morphism by
$(x,y)\mapsto x$. Take a local deformation of the the type
$Z(x^2+y^2-t)$ with $t<0$ and glue it to obtain a real curve $X_t$
(this corresponds to Construction II without real ramification).
Clearly $X_t$ is non-orientable, $g(X_t)=g$, $s(X_t)=s$ and it has a
covering $f_t:X_t\rightarrow \mathbb{P}^1$ defined over $\mathbb{R}$
of degree 4 with $k(f_Y)=k$. Finally assume $s \equiv g+1 \pmod { 2
}$. Since $s\leq g-1$ we can take a non-orientable real curve $Y$ of
genus $g-1$ such that $s(Y)=s$ and there is a covering
$f_Y:Y\rightarrow \mathbb{P}^1$ of degree 4 defined over
$\mathbb{R}$ with $k(f_Y)=k$. Repeating the previous construction
one obtains the curve $X_t$ one is looking for.

\end{proof}
\begin{bibsection}
\begin{biblist}

\bib{AllGre}{book}{
    author={Alling, N.L.},
    author={Greenleaf, N.},
    title={Foundations of the Theory of Klein Surfaces},
    series={Lecture Notes in Mathematics},
    volume={219},
    year={1971},
    publisher={Springer-Verlag},
    place={Berlin},
}
\bib{ref5}{article}{
    author={Arbarello, E.},
    author={Cornalba, M.},
    title={Su una congettura di Petri},
    journal={Comment. Math. Helv.},
    volume={56},
    year={1981},
    pages={1-38},
}
\bib{ref7}{article}{
    author={Arbarello, E.},
    author={Cornalba, M.},
    title={Footnotes to a paper of Beniamino Segre},
    journal={Math. Ann.},
    volume={256},
    year={1981},
    pages={341-362},
}
\bib{ref8}{book}{
    author={Arbarello, E.},
    author={Cornalba, M.},
    author={Griffiths, P.},
    author={Harris, J.},
    title={Geometry of algebraic curves Volume I},
    series={Grundlehren den Math. Wiss.},
    volume={267},
    publisher={Springer-Verlag},
    place={Berlin},
    year={1985},
}
\bib{ref23}{article}{
    author={Ballico, E.},
    title={Real algebraic curves and real spanned bundles},
    journal={Ricerche di Matematica},
    volume={50},
    year={2001},
    pages={223-241},
}
\bib{ref17}{article}{
    author={Ballico, E.},
    title={Real curves with fixed gonality and empty real locus},
    journal={Le matematiche},
    volume={60},
    year={2005},
    pages={129-131},
}
\bib{ref18}{article}{
    author={Ballico, E.},
    title={Real ramification points and real Weierstrass points of
real projective curves},
    journal={Glasnik Matemati\v{c}ki},
    volume={41},
    year={2006},
    pages={233-238},
}
\bib{ref15}{article}{
    author={Biswas, E.},
    title={On line bundles over real algebraic curves},
    journal={Bull. Sci. Math.},
    volume={134},
    year={2010},
    pages={447-449},
}
\bib{ref3}{thesis}{
    author={Chaudhary, S.},
    title={The Brill-Noether theorem for real algebraic curves},
    school={Ph.-D. Thesis, Duke University},
    year={1995},
}
\bib{ref22}{article}{
    author={Costa, A.F.},
    author={Izquierdo, M.},
    title={On real trigonal Riemann surfaces},
    journal={Math. Scand.},
    year={2006},
    volume={98},
    pages={53-68},
}
\bib{ref10}{article}{
    author={Coppens, M.},
    title={Totally non-real divisors in linear systems on smooth
    real curves},
    journal={Adv. Geometry},
    volume={8},
    year={2008},
    pages={551-555},
}
\bib{ref4}{article}{
    author={Coppens, M.},
    author={Martens, G.},
    title={Linear pencils on real algebraic curves},
    journal={Journal Pure Applied Algebra},
    volume={214},
    year={2010},
    pages={841-849},
}
\bib{ref20}{article}{
    author={Gabard, A.},
    title={Sur la representation conforme des surfaces de Riemann
\`a bord et une caract\'erisation des courbes s\'eparantes},
    journal={Comment. Math. Helv.},
    volume={81},
    year={2006},
    pages={945-964},
}
\bib{ref2}{article}{
 author={Gross, B.H.},
    author={Harris, J.},
    title={Real algebraic curves},
    journal={Ann. scient. Ec. Norm. Sup.},
    volume={14},
    year={1981},
    pages={157-182},
}
\bib{ref11a}{article}{
    author={Grothendieck, A.},
    title={Techniques de construction et th\'eor\`emes d'existence en g\'eometrie alg\'ebrique IV Les sch\'emas de Hilbert},
    journal={S\'eminaire Bourbaki},
    volume={221}
    year={1960/61},
}
\bib{ref11b}{article}{
    author={Grothendieck, A.},
    title={Techniques de construction et th\'eor\`emes d'existence en g\'eometrie alg\'ebrique V Les sch\'emas de Picard},
    journal={S\'eminaire Bourbaki},
    volume={232}
    year={1961/62},
}
\bib{ref19}{article}{
    author={Horikawa},
    title={On deformations of holomorphic maps I},
    journal={J. Math. Soc. Japan},
    volume={25},
    year={1973},
    pages={372-396},
}
\bib{ref12}{article}{
    author={Huisman, J.},
    journal={Compos. Math.},
    title={Real quotient singularities and nonsingular real
    algebraic curves in the boundary of the moduli space},
    journal={Compos. Math.},
    volume={118},
    year={1999},
    pages={42-60},
}
\bib{ref1}{article}{
    author={Huisman, J.},
    title={Real Teichm\"uller spaces and moduli of real algebraic
    curves},
    journal={Contemporary Mathematics},
    volume={253},
    year={2000},
    pages={145-179},
}
\bib{ref21}{article}{
    author={Martens, G.},
    title={Minimale Bl\"atterzahl bei Uberlagerungen Kleinschen
Fl\"achen der projektiven Ebene},
    journal={Achiv der Math.},
    volume={30},
    year={1978},
    pages={481-486},
}
\bib{ref9}{article}{
    author={Monnier, J.-Ph.},
    title={Very special divisors on real algebraic curves},
    journal={Bull. London Math. Soc.},
    status={to appear},
}
\bib{ref6}{article}{
    author={Segre, B.},
    title={Sui moduli delle curve poligonali e sopra un complemento
    al teorema di esistenza di Riemann},
    journal={Math. Ann.},
    volume={100},
    year={1928},
    pages={537-551},
}
\bib{ref13}{book}{
    author={Sernesi, E.},
    title={Deformations of algebraic schemes},
    series={Grundlehren der Math. Wiss.},
    volume={333},
    year={2006},
    publisher={Springer-Verlag},
    place={Berlin},
}
\bib{ref14}{article}{
    author={Teixidor, M.},
    title={Half-canonical series on algebraic curves},
    journal={Transactions of the AMS},
    volume={302},
    year={1987},
    pages={99-115},
}
\bib{ref16}{article}{
    author={Weichold, G.},
    title={Uber symmetrische Riemann'sche Fl\"achen und die
    Periodicit\"atsmoduln zugeh\"origen Abel'schen Normalintegrale
    erster Gattung},
    journal={Zeit. f. Math. u. Phys.},
    volume={28},
    year={1883},
    pages={321-351},
}

\end{biblist}
\end{bibsection}

\vspace{2\baselineskip}
\noindent Marc Coppens\\
Katholieke Hogeschool Kempen\\
Departement Industrieel Ingenieur en Biotechniek\\
KULeuven Dept. Wiskunde Groep Algebra\\
Kleinhoefstraat 4\\
B 2440 Geel Belgium\\
E-mail: marc.coppens@khk.be\\
Partially supported by the Fund of Scientific Research - Flanders
($G.0318.06$)

\vspace{2\baselineskip}
\noindent Johannes Huisman\\
Universit\'e Europ\'eenne de Bretagne\\
France\\
and\\
Universit\'e de Brest; CNRS, UMR 6205\\
Laboratoire de Math\'ematiques de Brest\\
ISSTB\\
6, avenue Victor Le Gorgeu\\
CS 93837\\
29238 Brest Cedex 3 France\\
E-mail: johannes.huisman@univ-brest.fr\\
Home page: http://pageperso.univ-brest.fr/$\sim$huisman\\

\end{document}